\newcommand{\fa}{\mathfrak a} 
\newcommand{\fb}{\mathfrak b} 
\newcommand{\fc}{\mathfrak c} 
\newcommand{\fg}{\mathfrak g} 
\newcommand{\fh}{\mathfrak h} 
\newcommand{\fn}{\mathfrak n} 
\newcommand{\fl}{\mathfrak l} 
\newcommand{\fz}{\mathfrak z} 
\newcommand{\fsl}{\mathfrak{sl}} 
\newcommand{\fso}{\mathfrak{so}} 
\newcommand{\cod}{{\mathrm{codim}}} 
\newcommand{\Orb}{{\mathrm{Orb}}}
\newcommand{\bK}{{\mathbb K}}
\newcommand{\bR}{{\mathbb R}} 
\newcommand{\bZ}{{\mathbb Z}} 
\newcommand{\bF}{{\mathbb F}} 
\newcommand{\cA}{{\mathcal A}} 
\newcommand{\cB}{{\mathcal B}} 
\newcommand{\cF}{{\mathcal F}} 
\newcommand{\cG}{{\mathcal G}} 
\newcommand{\cL}{{\mathcal L}} 
\newcommand{\cS}{{\mathcal S}} 
\newcommand{\cT}{{\mathcal T}} 
\newcommand{\va}{\ensuremath{\mathbf{a}}\xspace}
\newcommand{\ve}{\ensuremath{\mathbf{e}}\xspace}
\newcommand{\vh}{\ensuremath{\mathbf{h}}\xspace}
\newcommand{\vk}{\ensuremath{\mathbf{k}}\xspace}
\newcommand{\vx}{\ensuremath{\mathbf{x}}\xspace}
\newcommand{\vy}{\ensuremath{\mathbf{y}}\xspace}
\newcommand{\vz}{\ensuremath{\mathbf{z}}\xspace}
\newcommand{\tm}{\ensuremath{\mathrm{m}}\xspace}
\newcommand{\tn}{\ensuremath{\mathrm{n}}\xspace}
\newcommand{\ts}{\ensuremath{\mathrm{s}}\xspace}
\newtheorem{defn}{Definition}[section]
\newtheorem{theorem}[defn]{Theorem}
\newtheorem{lemma}[defn]{Lemma}
\newtheorem{prop}[defn]{Proposition}
\title[Subgroup Growth]{Subgroup Growth in Some Profinite Chevalley Groups}
\author{Inna Capdeboscq}
\email{I.Capdeboscq@warwick.ac.uk, \ K.Kirkina@warwick.ac.uk, \ D.Rumynin@warwick.ac.uk}
\author{Karina Kirkina}
\author{Dmitriy Rumynin}
\date{November 2, 2016}
\subjclass{Primary  20E07; Secondary 17B45, 17B70}
\keywords{subgroup growth, Lie algebras, Chevalley groups}
\begin{document}
\begin{abstract}
In this article we improve the known uniform bound for subgroup growth 
of Chevalley groups $\mathbf{G}(\bF_p[[t]])$.
We introduce 
a new parameter, the ridgeline number $v(\mathbf{G})$, 
and give 
new bounds for the subgroup growth of $\mathbf{G}(\bF_p[[t]])$
expressed through $v(\mathbf{G})$. We achieve this by deriving a new
estimate
for the codimension of $[U,V]$ where $U$ and $V$ are vector
subspaces in the Lie algebra of $\mathbf{G}$.
\end{abstract}

\maketitle

%\section{Introduction}

For a finitely generated group $G$, let $a_n(G)$ be the number of subgroups of $G$ of index $n$ and $s_n(G)$ the number of subgroups of $G$ of index at most $n$.  The ``subgroup growth" of $G$ is the asymptotic behaviour of the sequence $(a_n(G))_{n\in\mathbb{N}}$. It  turns out that the subgroup growth and structure of $G$ are not unrelated, and in fact the latter can sometimes be used as a characterisation of $G$.
(For complete details we refer the reader to a book by Lubotzky and
Segal 
\cite{kn::LS}.)

For example, Lubotzky and Mann \cite{kn::LM91} show that a group
$G$ is $p$-adic analytic if and only if there exists a constant $c>0$
such that $a_n(G)<n^c$. 
This inspiring result is followed by Shalev who 
proves that if $G$ is a pro-$p$ group  for which $a_n(G)\leq n^{c\log_pn}$
for some constant $c<\frac{1}{8}$, then $G$ is $p$-adic analytic.

Answering a question of Mann, Barnea and Guralnick investigate the
subgroup growth of $SL_2^1(\mathbb{F}_p[[t]])$ for $p>2$, and  show
that the supremum of the set of all those $c$
that a pro-$p$ group $G$ is $p$-adic analytic provided 
that $a_n(G)<n^{c\log_pn}$, is no bigger than $\frac{1}{2}$.
%$\sup \{ c\}$ for which a pro-$p$ group $G$ is $p$-adic analytic provided that $a_n(G)<n^{c\log_pn}$, is no bigger than $\frac{1}{2}$.
Thus one may see that not only the growth type, but also the precise values of the constants involved are important when studying the connection between subgroup growth and the  structure  of a group.

Later on Lubotzky and Shalev pioneer a study of the so-called
$\Lambda$-standard groups \cite{kn::LSh}.
A particular subclass of these groups are $\Lambda$-perfect groups for
which 
they show existence of a constant $c>0$ such that
$$a_n(G)<n^{c\log_pn}.$$ 
An important subclass of those groups are the congruence subgroups of Chevalley groups over $\bF_p[[t]]$. 
Let $\mathbf{G}$ be a simple simply connected Chevalley group scheme,
$G(1)$ the first congruence subgroup of $\mathbf{G}(\mathbb{F}_p[[t]])$. 
Ab\'ert, Nikolov and Szegedy show that if 
%$m$ is the dimension of the simple Lie algebra $\fg$ of type $\mathbf{G}$, then
$m$ is the dimension of $\mathbf{G}$, then
$$s_{p^k}(G(1))\leq p^{\frac{7}{2}k^2+mk},$$
that is,  
$s_n(G(1))\leq n^{\frac{7}{2}\log_pn+m}$
\cite{kn::ANS}.

%In this short article we improve the result of \cite{kn::ANS}. To state the result we need to discuss some interesting parameters
%of the Lie algebra $\fg$ of the group $G$.
%Let $l$ be the rank of $\fg$, $n$ the dimension, and $k$ the maximal dimension of the centraliser
%of a noncentral element $\vx\in\fg$. We also fix an invariant bilinear form $\eta=\langle\, ,\,\rangle$ on $\fg$.
%Let $\fg^0$ be its kernel and $r$ the dimension of $\fg^0$.

In this article we improve their estimates (cf. \cite{kn::ANS}). 
The Lie algebra $\fg_\bZ$ of $\mathbf{G}$ is defined over integers.
Let $\bK$ be a field  of characteristic $p$, which could be either zero or a prime.
To state the results we now  introduce a new parameter 
of the Lie algebra $\fg\coloneqq\fg_\bZ\otimes_\bZ \bK$. 
%To state the results we now  introduce a new parameter
%of the Lie algebra $\fg$ of $\mathbf{G}$. Let $\bK$ be the field
%of definition of $\fg$.
%Let $p$ be the characteristic of $\bK$, which could be either zero or a prime.
We fix an invariant bilinear form $\eta=\langle\, ,\,\rangle$ on $\fg$
of maximal possible rank. 
Let $\fg^0$ be its kernel.
Notice that the nullity $r\coloneqq\dim\fg^0$
of $\eta$ is independent of the choice of $\eta$.

\begin{defn}
\label{defn::v}
%Let $\fg$ be a simple Lie algebra of type $\mathbf{G}$ over a field
%$\bK$ of characteristic $p$ ($p$ is  either zero or a prime).
Let $l$ be the rank of $\fg$, $m$ its dimension and $s$ the maximal dimension of the centraliser
of a non-central element $\vx\in\fg$. 
%Fix an invariant bilinear form $\eta=\langle\, ,\,\rangle$ on $\fg$.
%Let $\fg^0$ be its kernel with $r\coloneqq \dim\fg^0$.
%
We define  the {\em ridgeline number} of $\fg$ as
$$v (\mathbf{G}) = v(\fg) \coloneqq  \frac{l}{m-s-r}.$$
\end{defn}

We discuss ridgeline numbers in Section~\ref{rid_small}.
The values of $v(\fg)$ can be found in the table in Appendix~\ref{table1}.

\begin{defn}
\label{defn::notsobad} The positive characteristic $p$ 
of the field $\bK$ is
called {\em good} if 
$p$ does not divide the coefficients
of the highest root.
The positive characteristic $p$ 
of the field $\bK$ is
called {\em very good} if $p$ is good and $\fg$ is simple.
We call the positive characteristic $p$ {\em tolerable} 
if any proper ideal of $\fg$ is contained in its centre.
\end{defn}

We discuss these restrictions in Section~\ref{rid_small}.
We may now state our main result.

\begin{theorem}
\label{theorem::1}
%et $\mathbf{G}(\mathbb{F}_p[[t]])$ be a simple simply connected
%hevalley group of rank $l\geq 2$ defined over $\mathbb{F}_p[[t]]$
%here $p$ is a tolerable prime. 
Let $\mathbf{G}$ be a simple simply connected
Chevalley group scheme of rank $l\geq 2$.
Suppose $p$ is a tolerable prime for $\mathbf{G}$. 
Let  $G(1)$
be the  first congruence subgroup of  $\mathbf{G}(\mathbb{F}_p[[t]])$, 
that is $G(1)=\ker(\mathbf{G}(\mathbb{F}_p[[t]])\twoheadrightarrow 
\mathbf{G}(\mathbb{F}_p))$.
%, and $\fg$  the Lie algebra of $\mathbf{G}$. 
If $m\coloneqq \dim\mathbf{G}$,  then 
 $$a_{p^k}(G(1))\leq  p^{\frac{(3+4v(\fg))}{2}k^2+(m-\frac{3}{2}-2v(\fg))k}.$$

If $l=2$ and $p$ is very good, then  a stronger estimate holds:
$$a_{p^k}(G(1))\leq  p^{\frac{3}{2}k^2+(m-\frac{3}{2})k}.$$ 
\end{theorem}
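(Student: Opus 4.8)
The plan is to run the Frattini-series method for subgroup growth of pro-$p$ groups (as used by Lubotzky--Shalev \cite{kn::LSh} and Ab\'ert--Nikolov--Szegedy \cite{kn::ANS}), which reduces the estimate to a bound on the minimal number of generators $d(H)$ of an open subgroup $H\le G(1)$, and then to feed into that bound the new codimension estimate for $[U,V]$ (governed by $v(\fg)$) in place of the crude input of \cite{kn::ANS}. In a pro-$p$ group every subgroup of index $p^k$ is maximal in one of index $p^{k-1}$, so
$$a_{p^k}(G(1))\ \le\ \prod_{i=0}^{k-1}\max\bigl\{\,a_p(H)\ :\ [G(1):H]=p^i\,\bigr\}\ \le\ \prod_{i=0}^{k-1}\max\bigl\{\,p^{d(H)}\ :\ [G(1):H]=p^i\,\bigr\},$$
using $a_p(H)=(p^{d(H)}-1)/(p-1)<p^{d(H)}$. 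It therefore suffices to prove $d(H)\le(3+4v(\fg))\,i+m$ whenever $[G(1):H]=p^i$, and $d(H)\le 3i+m$ in the rank-$2$ very good case; summing $\sum_{i=0}^{k-1}((3+4v)i+m)=\tfrac{3+4v}{2}k^2+(m-\tfrac32-2v)k$ and $\sum_{i=0}^{k-1}(3i+m)=\tfrac32k^2+(m-\tfrac32)k$ then gives exactly the two asserted bounds.

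To estimate $d(H)$ I would use the congruence filtration $G(j)=\ker(\mathbf G(\bF_p[[t]])\to\mathbf G(\bF_p[t]/(t^j)))$. Each $G(j)/G(j+1)$ is elementary abelian (since $G(j)^p\le G(pj)\le G(j+1)$) and is canonically $\fg$, the commutator inducing the grading-additive bracket, so that $\bigoplus_{j\ge1}G(j)/G(j+1)\cong\fg\otimes t\bF_p[t]$ as a graded restricted Lie algebra; here tolerability of $p$ is exactly what guarantees that this mod-$p$ object still has the features used below (in particular $[\fg,\fg]=\fg$, which for $l\ge2$ follows from tolerability, and that the kernel $\fg^0$ of $\eta$ is a central ideal of dimension $r$), so that the codimension estimate applies. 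For $H\le G(1)$ set $H(j)=H\cap G(j)$, $\bar H_j=H(j)G(j+1)/G(j+1)\le\fg$, $c_j=\cod_\fg\bar H_j$; then $\sum_j c_j=i$ and $[\bar H_a,\bar H_b]\subseteq\bar H_{a+b}$. Writing $\bar\Phi_j=(\Phi(H)\cap H(j))G(j+1)/G(j+1)\le\bar H_j$, one has $\bar\Phi_j\supseteq\sum_{a+b=j}[\bar H_a,\bar H_b]$ (ignoring the $p$-power contributions, which only enlarge $\bar\Phi_j$), while $\Phi(H)\le G(2)$ forces $\bar\Phi_1=0$. Since $d(H)=\dim_{\bF_p}H/\Phi(H)=\sum_j\dim_{\bF_p}(\bar H_j/\bar\Phi_j)$, this yields
$$d(H)\ =\ (m-c_1)\ +\ \sum_{j\ge2}\bigl(\cod_\fg\bar\Phi_j-c_j\bigr),$$
so that the desired inequality becomes $\sum_{j\ge2}\cod_\fg\bar\Phi_j\le 4\bigl(1+v(\fg)\bigr)i$.

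Now, for each $j\ge2$ and each splitting $j=a+b$ one has $\cod_\fg\bar\Phi_j\le\cod_\fg[\bar H_a,\bar H_b]$, and the codimension estimate for $[U,V]$ bounds the right-hand side linearly in $c_a+c_b$ with slope $v(\fg)$ in the regime where it is effective, the "large" splittings being absorbed against $\sum_j c_j=i$. Two auxiliary points must be dealt with: (i) the depth $n$ with $G(n)\le H$ must be bounded linearly in $i$, so that $\bar\Phi_j=\fg$ for $j\ge 2n$ and the sum is finite --- this rests on the rigidity of graded subalgebras of $\fg\otimes t\bF_p[t]$, namely that a sufficiently long stretch with $\bar H_j=\fg$ cannot later be interrupted, because $\fg$ has no proper non-central ideal; and (ii) for each $j$ one must pick a splitting $j=a+b$ (e.g.\ the balanced one) in which no index is reused more than boundedly often, so that the slope-$v(\fg)$ contributions telescope against $\sum_j c_j=i$. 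Optimising these choices is what produces the constant $3+4v(\fg)$. For the rank-$2$ very good case, the subalgebra lattice of $\fg$ is highly restricted (Borel subalgebras, $\fsl_2$-subalgebras, toral subalgebras and their subalgebras), which forces the profile $(c_j)$ to collapse much faster and removes the $v$-dependent terms, giving the stronger constant $3$. The step I expect to be the main obstacle is precisely this bookkeeping: extracting \emph{exactly} the constant $3+4v(\fg)$ --- not some larger multiple --- from the interplay of the range-restricted bracket estimate, the constraint $\sum_j c_j=i$, and the linear-in-$i$ depth bound, together with carrying out the rank-$2$ refinement cleanly enough to land on $3$ rather than on $3+4v(\fg)$ with the (small but nonzero) rank-$2$ value of $v$.
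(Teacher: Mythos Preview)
Your overall framework is exactly the paper's: reduce to bounding $d(H)$ for open $H\le G(1)$ via the Lubotzky--Shalev estimate, pass to the graded Lie algebra $\fg\otimes t\bF_p[t]$, and then control $\sum_j\cod_\fg\bar\Phi_j$ using the codimension inequality $\cod[U,V]\le\alpha(\cod U+\cod V)$ with $\alpha=1+v(\fg)$. The summation $\sum_{i=0}^{k-1}\big((3+4v)i+m\big)$ is also exactly what the paper does.

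Where you diverge is in the two ``auxiliary points'', and both are overcomplications. For (i), no linear depth bound is needed: $H$ is open, so $H\supseteq G(n)$ for some $n$, hence $c_j=0$ for $j\ge n$ automatically; the sums are finite without further argument. For (ii), the balanced splitting you yourself mention already does the job with no further optimisation: using $j=\lfloor j/2\rfloor+\lceil j/2\rceil$ one gets
\[
\sum_{j\ge 2}\cod\bar\Phi_j\ \le\ \alpha\sum_{j\ge 2}\big(c_{\lfloor j/2\rfloor}+c_{\lceil j/2\rceil}\big)\ \le\ 4\alpha\sum_{a\ge 1}c_a\ =\ 4\alpha\,i,
\]
since each index $a$ occurs at most four times (twice in $j=2a$, once each in $j=2a\pm1$). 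Then $d(H)\le \dim\mathcal{H}/\mathcal{H}'=\dim\mathcal{L}/\mathcal{H}'-\dim\mathcal{L}/\mathcal{H}\le m+4\alpha i-i=m+(4\alpha-1)i$, and $4\alpha-1=3+4v(\fg)$. So the constant you were worried about extracting falls out in one line.

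Your approach to the rank-$2$ very good case, however, is a genuine misconception. The improvement to the constant $3$ does \emph{not} come from restrictions on the subalgebra lattice of $\fg$ forcing the profile $(c_j)$ to collapse faster. It comes from running the identical argument above but with $\alpha=1$ instead of $\alpha=1+v(\fg)$; that is, from the stronger codimension inequality $\cod[U,V]\le\cod U+\cod V$ (the second clause of Theorem~\ref{theorem::2}), which the paper proves for rank-$2$ algebras in very good characteristic by a case-by-case analysis of $A_2$, $C_2$, $G_2$ using nilpotent orbit closures and centraliser dimensions. Once that sharper input is available, $4\alpha-1=3$ and nothing further is needed.
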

\vskip 3mm
Notice 
%that as one can see from the table above, the biggest possible value of $v(\fg)$ is $\frac{1}{2}$, making $\frac{3+4v(\fg)}{2}\leq \frac{5}{2}$.
that as one can see from the table in Appendix~\ref{table1}, 
with one exception 
(when $\mathbf{G}=A_l$  and $p$ divides $l+1$) the biggest possible value of $v(\fg)$ 
is $\frac{1}{2}$ ($v(\fg)\leq\frac{2}{3}$ in that special case). 
This makes  $\frac{3+4v(\fg)}{2}\leq \frac{5}{2}$ ($\frac{3+4v(\fg)}{2}\leq \frac{17}{6}$ correspondingly).

 Our proof of Theorem~\ref{theorem::1} in many ways follows the ones of  Barnea and Guralnick and of Ab\'ert, Nikolov and Szegedy. 
The improvement in the result is due to the following new estimates.

\begin{theorem}
\label{theorem::2}
%Let $\bK$ be a field of characteristic $p$,
%$\mathbf{G}$ a simple simply connected algebraic group
% of rank $l$ defined over $\bK$,
%$\fg$  its Lie algebra.
Let $\fa$ be a Lie algebra over a field $\bK$.
%Let $\fa$ be an $m$-dimensional Lie algebra over a field $\bK$.
Suppose that 
the Lie algebra
$\fg=\fa\otimes_{\bK} \overline{\bK}$
is a Chevalley Lie algebra of rank $l\geq 2$
and that the characteristic of $\bK$ is zero or tolerable.
Then for any two subspaces $U$ and $V$ of $\fa$, we have
$$
\cod ([U,V]) \leq (1+v(\fg))(\cod (U) + \cod (V)).
$$

%Moreover, if $l=2$ and $\fg$ is of type $A_2, B_2$ or $G_2$,
If $l=2$ and 
the characteristic of $\bK$ is zero or very good,
%$p$ is very good for $\fg$, 
a stronger result holds:
$$
\cod ([U,V]) \leq \cod (U) + \cod (V).
$$
\end{theorem}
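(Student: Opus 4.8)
The plan is to reduce the bound on $\cod([U,V])$ to a statement about individual elements via the centraliser data packaged in the ridgeline number. First I would pass to $\overline{\bK}$: since $[U,V]\otimes\overline{\bK} = [U\otimes\overline{\bK}, V\otimes\overline{\bK}]$ and codimension is insensitive to base field extension, it suffices to prove the inequality for $\fg$ itself with $U,V$ subspaces of $\fg$. Next I would exploit the invariant form $\eta$: writing $\fg = \fg^0 \oplus \fg'$ where $\fg'$ is a complement on which $\eta$ is nondegenerate, the bracket $[U,V]$ is controlled by its pairing against $\fg'$. The key linear-algebra fact is that $\dim\fg' - \dim[U,V]'$ (the "missing directions" inside the nondegenerate part) is bounded by $\dim\{x \in \fg' : \eta([U,x],V)=0\}$, i.e. by the dimension of the space of $x$ perpendicular (under the nondegenerate form) to $[U,V]$, and such an $x$ satisfies $[x,U]\perp V$, which when $U$ is large forces $[x,\fg]$ to be small — that is, $x$ lies in a large centraliser.

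The heart of the argument is then the following counting step. For $x$ in the relevant space, $\cod_{\fg}(C_{\fg}(x)) \le \dim[x,\fg] \le \cod(U)+\cod(V) $ roughly (because $[x,U]$ must land in the annihilator of $V$), so $x$ ranges over non-central elements whose centraliser has codimension at most $\cod(U)+\cod(V)$. By the definition of $s$ as the maximal dimension of a centraliser of a non-central element, any such $x$ has $\cod(C_{\fg}(x)) \ge m-s$. Combining: the set of obstructing directions $x$ has dimension at most the number of non-central $x$ with $m-s \le \cod(C_{\fg}(x)) \le \cod(U)+\cod(V)$, and the ridgeline number $v(\fg) = l/(m-s-r)$ is precisely calibrated so that $\dim(\text{obstruction}) \le v(\fg)\big(\cod(U)+\cod(V)\big)$ once one accounts for the $r$-dimensional kernel $\fg^0$ separately (the tolerable hypothesis guarantees $\fg^0$ is central, so it contributes at most its own dimension and is absorbed correctly). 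Adding the "$1\cdot(\cod(U)+\cod(V))$" coming from the trivial bound on the part of the codimension visible directly inside $\fg'$ gives the factor $(1+v(\fg))$.

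For the rank $2$ case with $p$ very good, $\fg$ is simple and $\eta$ is nondegenerate ($r=0$), so the kernel bookkeeping disappears; here I expect one must instead use an explicit case analysis over $A_2, B_2, G_2$, checking directly that the obstruction space for $x$ with small centraliser codimension is itself small enough to yield the clean bound $\cod(U)+\cod(V)$ — essentially showing $v(\fg)$'s contribution can be absorbed because in these cases the regular semisimple elements (codimension $l$ centraliser excess) are the generic obstruction and $l$ is small relative to the structure. The main obstacle I anticipate is the middle step: controlling $\dim\{x : [x,U]\subseteq V^{\perp}\}$ uniformly, because $x$ need not centralise $U$ — it only maps $U$ into a subspace of bounded codimension — so one needs a bilinear/dimension-counting argument (a rank inequality for the pairing $(x,u)\mapsto \eta([x,u],-)$ on $V$) rather than a naive centraliser bound, and making this tight enough to get the constant $1+v(\fg)$ rather than something larger is where the real work lies.
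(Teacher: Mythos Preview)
Your proposal has the right instinct in one place but a genuine gap elsewhere. The use of the invariant form and the observation that $x\perp [U,V]$ implies $[x,U]\subseteq V^{\perp}$, hence $\dim\fc(x)\geq \dim U-\cod(V)-r$, is exactly the content of the paper's Proposition~\ref{my_estimate}: it shows that once $\dim U+\dim V>m+s+r$ every such $x$ is central, whence $[U,V]=\fg$. So far so good.

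The gap is your ``counting step''. You assert that the space of obstructing $x$ has dimension at most $v(\fg)(\cod U+\cod V)$ because such $x$ are constrained to have $m-s\leq \cod\fc(x)\leq \cod U+\cod V$, but there is no mechanism that converts a range of possible centraliser codimensions into a bound on the \emph{dimension} of the set of such $x$; the locus of elements with prescribed centraliser dimension can be very large (e.g.\ the regular elements form an open dense set). The sentence ``$v(\fg)$ is precisely calibrated so that\ldots'' is an assertion, not an argument, and your own final paragraph concedes you do not know how to make the bilinear rank inequality tight. The paper does \emph{not} attempt this. Instead it uses a clean dichotomy: either $\dim U+\dim V>m+s+r$, in which case $[U,V]=\fg$ by the form argument above; or $\cod U+\cod V\geq m-s-r$, in which case the known bound of Ab\'ert--Nikolov--Szegedy,
\[
\cod([U,V])\;\leq\; l+\cod U+\cod V,
\]
combined with $l=v(\fg)(m-s-r)\leq v(\fg)(\cod U+\cod V)$, immediately yields $(1+v(\fg))(\cod U+\cod V)$. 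You never invoke this external input, and without it (or a genuine replacement) the argument does not close.

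For rank~$2$ your expectation of an explicit case analysis over $A_2$, $C_2$, $G_2$ is correct and matches the paper, though the work there is more delicate than ``$l$ is small'': one must exploit the full nilpotent orbit stratification and repeatedly locate regular or nearly regular elements inside $U$, $V$, or $U\cap V$.
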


We conjecture that the second estimate holds for any Lie algebra
$\fg$ (if  the characteristic of $\bK$ is zero or very good).

\section{Proof of Theorem~\ref{theorem::1}}
The proof of Theorem~\ref{theorem::1} 
relies on Theorem~\ref{theorem::2} 
that will be proved later.
We follow  
Ab\'ert, Nikolov and Szegedy  
\cite[Theorem 2]{kn::ANS}
and
Barnea, Guralnick
\cite[Theorem 1.4]{kn::BG}.

Suppose that hypotheses of Theorem~\ref{theorem::1} hold.
We start with the following observation
(cf. \cite[Corollary 1 and Lemma 1]{kn::ANS} and 
\cite[Lemma 4.1]{kn::LSh}
).
\begin{lemma}
\label{lemma::generators}
If $H$ is an open subgroup of $G(1)$ and $d(H)$ 
is the minimal number of generators of $H$, then
$$d(H)\leq m+(3+4v(\fg))\log_p|G(1):H|.$$
Moreover, if $l=2$
and $p$ is very good, 
then $$d(H)\leq m+3\log_p|G(1):H|.$$
\end{lemma}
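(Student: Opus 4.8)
The plan is to bound $d(H)$ for an open subgroup $H \le G(1)$ by relating it to the codimension of a certain subspace of the graded Lie algebra, and then invoke Theorem~\ref{theorem::2}. First I would recall the standard machinery (as in \cite{kn::ANS} and \cite{kn::LSh}): the congruence filtration $G(i) = \ker(\mathbf{G}(\bF_p[[t]]) \to \mathbf{G}(\bF_p[t]/t^i))$ makes $G(1)$ a uniserial pro-$p$ group whose associated graded object is $\bigoplus_{i\ge 1} \fg \otimes t^i$, with Lie bracket $[\fg\otimes t^i, \fg\otimes t^j] \subseteq \fg\otimes t^{i+j}$ induced from the bracket on $\fg$. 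For an open subgroup $H$, intersecting with the filtration gives subspaces $H_i \le \fg \otimes t^i \cong \fg$, and $\sum_i \cod(H_i) = \log_p |G(1):H|$. By a Frattini-type argument (cf. \cite[Corollary 1, Lemma 1]{kn::ANS}), $d(H)$ is bounded by the codimension of $H + [H,H]$ inside the graded algebra in low degrees; more precisely one shows $d(H) \le \dim \fg + \sum_{i\ge 2}\cod(H_i + \sum_{j+k=i}[H_j,H_k])$, where the leading $\dim\fg = m$ accounts for degree $1$.

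Next I would estimate $\cod\big(H_i + \sum_{j+k=i}[H_j,H_k]\big)$ for each $i \ge 2$. The key point is that for $i \ge 2$ this space contains $[H_1, H_{i-1}]$, so by Theorem~\ref{theorem::2} its codimension is at most $(1+v(\fg))(\cod(H_1) + \cod(H_{i-1}))$. Summing over $i \ge 2$ and being slightly careful with the degree-$2$ term (where $[H_1,H_1]$ appears and $H_1$ is counted with a factor $2$), one gets a bound of the shape $2(1+v(\fg))\sum_i \cod(H_i)$ plus lower-order corrections, which after collecting constants yields the coefficient $3 + 4v(\fg)$ in front of $\log_p|G(1):H| = \sum_i \cod(H_i)$, together with the additive $m$. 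In the rank-$2$ very good case one instead uses the second, sharper inequality of Theorem~\ref{theorem::2}, replacing $1+v(\fg)$ by $1$ throughout and arriving at the coefficient $3$.

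The main obstacle will be pinning down the exact bookkeeping that produces the stated constants $3+4v(\fg)$ (respectively $3$) and the additive term $m$ — in particular handling the boundary degrees correctly: degree $1$ contributes the full $m$ with no savings, the degree-$2$ bracket $[H_1,H_1]$ forces $\cod(H_1)$ to effectively appear with multiplicity, and one must check that no term is double-counted when passing from the graded Lie algebra estimate back to the Frattini quotient of $H$. I would also need to verify that the hypotheses of Theorem~\ref{theorem::2} are met over $\bF_p$ — that is, that the relevant graded pieces are the Chevalley Lie algebra $\fg = \fg_\bZ \otimes_\bZ \bF_p$ with $p$ tolerable (respectively very good) — which is immediate from the definition of $G(1)$ but should be stated. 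Once the inductive/summation step is set up cleanly, the rest is routine arithmetic.
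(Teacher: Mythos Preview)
Your overall architecture matches the paper's: pass to the graded Lie algebra $\mathcal{L}\cong\fg\otimes t\bF_p[t]$, attach to $H$ the graded subalgebra $\mathcal{H}$ with components $H_i\le\fg$, bound $d(H)\le\dim\mathcal{H}/\mathcal{H}'$, and control the graded pieces of $\mathcal{H}'$ via Theorem~\ref{theorem::2}. But the displayed inequality $d(H)\le m+\sum_{i\ge 2}\cod\big(H_i+\sum_{j+k=i}[H_j,H_k]\big)$ is not what you want: since $\mathcal{H}$ is a subalgebra, $[H_j,H_k]\subseteq H_{j+k}=H_i$, so the expression collapses to $m+\sum_{i\ge2}\cod H_i$, which neither uses Theorem~\ref{theorem::2} nor is a valid bound for $\dim\mathcal{H}/\mathcal{H}'$ (take $H=G(2)$: then every $\cod H_i=0$ for $i\ge2$, yet $\dim\mathcal{H}/\mathcal{H}'=2m$). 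The quantity to bound is $\dim\mathcal{L}/\mathcal{H}'=\sum_i\cod_\fg H'_i$; one then recovers $\dim\mathcal{H}/\mathcal{H}'$ by subtracting $\dim\mathcal{L}/\mathcal{H}$, and it is precisely this subtraction that turns a coefficient $4\alpha$ into $4\alpha-1=3+4v(\fg)$, not ``lower-order corrections''.

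The more substantive gap is your choice of bracket. Bounding $\cod H'_i\le\cod[H_1,H_{i-1}]\le\alpha(\cod H_1+\cod H_{i-1})$ and summing over $i$ accumulates $\cod H_1$ once for every degree in the support of $\mathcal{L}/\mathcal{H}'$, and there is no bound on the length of that support in terms of $\sum_j\cod H_j$; you will not get a linear estimate this way. The idea that makes the sum collapse (already in \cite{kn::ANS}, and reused in the paper) is to take the \emph{balanced} pair: bound $\cod H'_i$ by $\cod[H_{\lfloor i/2\rfloor},H_{\lceil i/2\rceil}]$. Then each index $j$ occurs as $\lfloor i/2\rfloor$ or $\lceil i/2\rceil$ for at most four values of $i$ (namely $2j-1,2j,2j,2j+1$), so
\[
\sum_{i\ge 2}\cod H'_i\;\le\;\alpha\sum_{i\ge 2}\big(\cod H_{\lfloor i/2\rfloor}+\cod H_{\lceil i/2\rceil}\big)\;\le\;4\alpha\sum_{j\ge 1}\cod H_j\;=\;4\alpha\,\dim\mathcal{L}/\mathcal{H}.
\]
Adding the degree-$1$ term $\cod H'_1\le m$ and subtracting $\dim\mathcal{L}/\mathcal{H}$ gives $d(H)\le m+(4\alpha-1)\log_p|G(1):H|$ with $\alpha=1+v(\fg)$ (respectively $\alpha=1$ in the rank-$2$ very good case). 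Once you use the balanced pairing, the rest really is routine arithmetic.
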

Notice that in the second case
$\fg=A_2$, $C_2$ or $G_2$
and
$m=8$, $10$ or $14$ correspondingly.
%The proof of this statement is based in the proof of  Lemma 4.1 of 
%\cite{kn::LSh} and on the modification of Lemma 1 on p. 373 of 
%\cite{kn::ANS}.
%For completeness we provide here its condensed version  with the 
%adjustment leading to the improvement of the estimate.
\begin{proof}
First of all recall that $d(H)=\log_p|H:\Phi(H)|\leq \log_p|H:H'|$
where  
$\Phi(H)$ is the Frattini subgroup. 
Because of the correspondence between the open subgroups of $G(1)$ and  
subalgebras of its graded Lie algebra  $\mathcal{L}=L(G(1))$ (see 
\cite{kn::LSh}), $\log_p|H:H'|\leq\dim\mathcal{H}/\mathcal{H}'$ where 
$\mathcal{H}=L(H)$ is the corresponding subalgebra of $\mathcal{L}$.
Hence it suffices to show that $$\dim\mathcal{H}/\mathcal{H}'\leq 
m+(3+4v(\fg))\dim\mathcal{L}/\mathcal{H}$$
in the general case, and  that $$\dim\mathcal{H}/\mathcal{H}'\leq 
m+3\dim\mathcal{L}/\mathcal{H}$$ in the very good rank $2$ case.

Recall that the graded Lie algebra $\cL$
is isomorphic to
$\fg\otimes_{\bF} t\bF[t]$ where $\bF=\bF_p$.
%We now follow word to word the argument of the proof of Lemma 1 of 
%\cite{kn::ANS} until $(\*)$.
%
Since every element $a\in\mathcal{L}$ can be uniquely written as 
$a=\Sigma_{i=1}^{\infty} a_i\otimes t^i$ with $a_i\in \fg$,
one can define $l(a)\coloneqq a_s$ where $s$ is the smallest integer 
such that $a_s\neq 0$, 
and in this case $s\coloneqq \deg(a)$.
Now set $$H_i\coloneqq \langle l(a) \mid a\in\mathcal{H} \ \mbox{with} \ \deg(a)=i\rangle.$$
Observe that 
$H_i=\{ l(a) \mid a\in\mathcal{H} \ \mbox{with}
\ \deg(a)=i\}\cup\{ 0\}.$
Then $\dim \mathcal{L}/\mathcal{H}=\Sigma_{i=1}^{\infty} \dim \fg/H_i$, and 
this sum is finite as the left hand side is finite.
Then $$[H_i\otimes t^i, H_j\otimes t^j]\subseteq [H_i,H_j]\otimes 
t^{i+j}\subseteq H'_{i+j}\otimes t^{i+j}$$ where $H'_{i+j}\coloneqq \langle l(a) 
\mid a\in\mathcal{H}' \ \mbox{with}\ \deg(a)=i+j\rangle$,
and so $\dim \fg/[H_i, H_j]\geq \dim \fg/H'_{i+j}$. Adding up these 
inequalities for $i=j$ and $i=j+1$ we get
$$\dim \mathcal{L}/\mathcal{H}'=\Sigma_{i=1}^{\infty} \dim 
L/H_i'\leq\dim \fg+\Sigma_{1\leq i\leq j\leq i+1}^{\infty} \dim \fg/[H_i, 
H_j]. \ \ $$%(\*)$$
Now we use the estimates of Theorem~\ref{theorem::2}:
$$\dim \mathcal{L}/\mathcal{H}'\leq m+\Sigma_{1\leq i\leq j\leq 
i+1}^{\infty} \alpha (\dim \fg/H_i+\dim \fg/H_j)\leq 
m+4\alpha \dim \mathcal{L}/\mathcal{H},$$
where $\alpha =1+v(\fg)$ or $1$ depending on the rank of 
$\fg$ and $p$. The result follows immediately.
\end{proof}

%We may now apply Lemma 4.1 of \cite{kn::LSh}, which says that 
Now we apply an estimate \cite[Lemma 4.1]{kn::LSh}: 
$a_{p^k}(G(1))\leq p^{g_1+...+g_{p^{k-1}}}$ 
where
$$
g_{p^i}=g_{p^i}(G(1))=\max\{d(H)\mid H\leq_{open}G(1), |G(1):H|=p^i\}.
$$
Using Lemma~\ref{lemma::generators}, in the general case ($l\geq 
2$) we have
$$a_{p^k}(G(1))\leq p^{\Sigma_{i=0}^{i=k-1} m+(3+4v(\fg))i}= 
p^{\frac{(3+4v(\fg))}{2}k^2+(m-\frac{3}{2}-2v(\fg))k}.$$
For  $l=2$ and very good $p$,  Lemma~\ref{lemma::generators}  gives us
$$a_{p^k}(G(1))\leq p^{\Sigma_{i=0}^{i=k-1} m+3i}= 
p^{\frac{3}{2}k^2+(m-\frac{3}{2})k}.$$

This finishes the proof of the theorem.

\section{Ridgeline numbers and small primes}
\label{rid_small}
We adopt the notations of  Definition~\ref{defn::v}.
%It is well-known 
We prove that $m-s=2(h^\vee -1)$ 
where $h^\vee$ is the dual Coxeter number of $\fg$
(see Proposition~\ref{Dual_Cox}). 
%(cf. \cite{kn::Wa}).
Therefore, 
$$
v(\fg)=\frac{l}{2(h^\vee -1)-r}.
$$
We present the values of $v(\fg)$ 
in Appendix~\ref{table1}. 
We include only Lie algebras in tolerable characteristics
(see Definition~\ref{defn::notsobad})
where our method produces new results.

Let us remind the reader that the very good characteristics are
$p\nmid l+1$ in type $A_l$,
$p\neq 2$ in types $B_l$, $C_l$, $D_l$,
$p\neq 2,3$ in types $E_6$, $E_7$, $F_4$, $G_2$,
and $p\neq 2,3,5$ in type $E_8$.
If $p$ is very good, the Lie algebra
$\fg$ behaves as in characteristic zero.
In particular, $\fg$ is simple, its Killing form
is non-degenerate, etc. 
Let us contemplate what calamities betide the Lie algebra $\fg$
in small characteristics.

Suppose that $p$ is tolerable but not very good.
If $p$ does not divide 
the determinant of the Cartan matrix of $\fg$, 
the Lie algebra $\fg$ is simple.
This covers the following primes:
$p= 2$ in types $E_6$ and $G_2$,
$p=3$ in types $E_7$ and $F_4$,
$p= 2,3,5$ in type $E_8$.
In this scenario, the $\fg$-modules
$\fg$ and $\fg^\ast$ are isomorphic, which 
immediately gives us
a non-degenerate invariant bilinear form on $\fg$ 
\cite[0.13]{kn::Hum}.
%We expect that this form is symmetric: it can probably be constructed 
%on Chevalley's $\bZ$-form $\fg_\bZ$ and then reduced to characteristic
%$p$.
%We don't use the symmetricity of the form, hence it is of no concern
%for us.

If $p$ divides the determinant of the Cartan matrix of $\fg$, there is
more than one Chevalley Lie algebra. 
We study the simply connected Lie algebra $\fg$, i.e., $[\fg,\fg]=\fg$
and $\fg/\fz$ is simple (where $\fz$ is the centre).
There is a canonical map to the adjoint Lie algebra $\fg^\flat$:
%(adjointness manifests itself in the fact that the centre
%of $\fg^\flat$ is zero):
$$
\varphi :
\fg = \fh \oplus \bigoplus_{\alpha} \fg_{\alpha}
\rightarrow 
\fg^\flat = \fh^\flat \oplus \bigoplus_{\alpha} \fg_{\alpha}. 
$$
The map $\varphi$ is the identity on the root spaces $\fg_\alpha$.
Let us describe it on the Cartan subalgebras.
The basis of the Cartan subalgebra $\fh$ are the 
simple coroots
$\vh_i=\alpha_i^\vee = [\ve_{\alpha_i}, \ve_{-\alpha_i}]$. 
The basis of the Cartan subalgebra $\fh^\flat$ are the fundamental coweights
$\vy_i=\varpi_i^\vee$ 
defined by
$\alpha_i (\vy_j)=\delta_{i,j}$.
Now the map $\varphi$ on the Cartan subalgebras
is given by
$$
\varphi (\vh_i) = \sum_j c_{j,i} \vy_j
$$
where $c_{j,i}$ are entries of the Cartan matrix of the coroot system
of
$\fg$.
The image of $\varphi$
is $[\fg^\flat,\fg^\flat]$.
The kernel of $\varphi$
is the centre~$\fz$. From our description
$\fz$ is the subspace of $\fh$ equal to the null space of the Cartan
matrix. It is equal to $\fg^0$, the kernel of $\eta$.
The dimension of $\fz$ is at most $2$
(see the values of $r$ in Appendix~\ref{table1}).

The key dichotomy now is whether the Lie algebra $\fg/\fz$ is simple or not.
If $\fg$ is simply-laced, the algebra $\fg/\fz$ is simple.
This occurs when 
$p\mid l+1$ in type $A_l$,
$p=2$ in types $D_l$ and $E_7$,
$p=3$ in type $E_6$.
Notice that 
$A_1$ in characteristic 2
needs to be excluded:
$\fg/\fz$ is abelian rather than simple.
In this scenario the
$\fg$-modules  
$\fg/\fz$
and
$(\fg/\fz)^\ast$
are isomorphic.
This gives us an invariant bilinear form with the kernel $\fz$
\cite[0.13]{kn::Hum}.

Let us look meticulously at 
$\fg$ of type $D_l$ when $p=2$. 
The standard representation gives a homomorphism of Lie algebras 
\begin{equation*}
\rho : \fg \rightarrow \fso_{2l} (\bK), \ \ 
\vx \mapsto \rho (\vx) =
\begin{pmatrix}
\rho_{11} (\vx) & \rho_{12} (\vx) \\
\rho_{21} (\vx) & \rho_{22} (\vx)
\end{pmatrix},
%, \ \ 
%\vy \mapsto \rho (\vy) =
%\begin{pmatrix}
%Y_{11} & Y_{12} \\
%Y_{21} & Y_{22}
%\end{pmatrix},
\end{equation*}
where $\rho_{22} (\vx)= \rho_{11}(\vx)^t$, while 
$\rho_{12}(\vx)$ and  $\rho_{21}(\vx)$ 
are skew-symmetric $l\times l$-matrices, 
which for $p=2$ is equivalent to symmetric with zeroes 
on the diagonal.
The Lie algebra $\fso_{2l} (\bK)$
has a 1-dimensional centre spanned by the identity matrix.
If $l$ is odd, $\rho$ is an isomorphism,
and 
$\fg$ has a 1-dimensional centre.
However, if $l$ is even, $\rho$ has a 1-dimensional kernel,
and 
$\fg$ has a 2-dimensional centre.
 
It is instructive to observe how the standard representation $\rho$
equips $\fg$ with an invariant form.
A skew-symmetric matrix $Z$ can  be written uniquely as a
sum $Z=Z^L + Z^U$, where $Z^L$ is strictly lower triangular and $Z^U$ is
strictly upper triangular. Then the bilinear form is given by 
$$
\eta (\vx,\vy) \coloneqq 
\langle \rho(\vx), \rho(\vy)\rangle \coloneqq  
\text{Tr} \, 
(\rho_{11} (\vx) \rho_{11} (\vy) + \rho_{12}(\vx)^L \rho_{21}(\vy)^U 
+ \rho_{21}(\vx)^L \rho_{12}(\vy)^U).
$$ 
This form $\eta$ is a reduction of
the form $\frac{1}{2}\text{Tr} \, (\varphi(\vx)\varphi(\vy))$ 
on $\fso_{2l} (\bZ)$, hence it is invariant.

Finally we suppose that $p$ is not tolerable.
This happens when 
$p=2$ in types $B_l$, $C_l$ and $F_4$
or
$p=3$ in type $G_2$.
In all these cases
$\fg$ is not simply-laced
and the quotient algebra $\fg/\fz$ is not simple.
The short root vectors generate a
proper non-central ideal $I$.
This ideal sits in
the kernel of any non-zero invariant form.
Consequently, our method fails to produce any new result.
%This happens when 
%$p=2$ in types $B_l$, $C_l$ and $F_4$
%or
%$p=3$ in type $G_2$. These are precisely the Lie algebras excluded
%by our ``tolerable'' condition.

\section{Proof of  Theorem~\ref{theorem::2}: the General Case} 

Let $\fa$ be an $m$-dimensional Lie algebra over a field $\bK$ 
of characteristic $p$ (prime or zero). We consider 
it as a topological
space in the Zariski topology.
We also consider a function 
$\dim \circ \fc:\fa \to \bR$ that for an element $\vx\in\fa$
computes the dimension of its centraliser $\fc(\vx)$.
\begin{lemma}
\label{lem1}
The function $\dim \circ \fc$
is upper semicontinuous, i.e.,  
for any number $n$
the set \newline
$\{\vx\in\fa\,\mid\, \dim (\fc (\vx))\leq n\}$ is Zariski open.
\end{lemma}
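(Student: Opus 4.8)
The plan is to exhibit the set $\{\vx \in \fa \mid \dim(\fc(\vx)) \le n\}$ as the complement of the vanishing locus of a family of polynomial functions on $\fa$, which automatically makes it Zariski open. First I would fix a basis $\ve_1, \dots, \ve_m$ of $\fa$ and write an arbitrary element as $\vx = \sum_i x_i \ve_i$. The adjoint map $\mathrm{ad}_{\vx} : \fa \to \fa$ is then represented, in this basis, by an $m \times m$ matrix $M(\vx)$ whose entries are linear forms in the coordinates $x_1, \dots, x_m$: indeed $M(\vx)_{k,j}$ is the coefficient of $\ve_k$ in $[\vx, \ve_j] = \sum_i x_i [\ve_i, \ve_j]$, so it is $\sum_i c_{ij}^k x_i$ where the $c_{ij}^k$ are the structure constants.

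The centraliser $\fc(\vx) = \ker(\mathrm{ad}_{\vx})$, so $\dim \fc(\vx) = m - \mathrm{rank}\, M(\vx)$. Hence $\dim(\fc(\vx)) \le n$ is equivalent to $\mathrm{rank}\, M(\vx) \ge m - n$, which in turn holds if and only if some $(m-n) \times (m-n)$ minor of $M(\vx)$ is non-zero. Each such minor is a polynomial in $x_1, \dots, x_m$ (a homogeneous polynomial of degree $m-n$, being a determinant of a submatrix with linear entries). Therefore
\[
\{\vx \in \fa \mid \dim(\fc(\vx)) \le n\} = \bigcup_{I,J} \{\vx \in \fa \mid \det M(\vx)_{I,J} \ne 0\},
\]
where $I, J$ range over all size-$(m-n)$ subsets of $\{1, \dots, m\}$ indexing rows and columns. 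Each set on the right is the complement of a hypersurface, hence Zariski open, and a finite union of open sets is open. (The degenerate cases $n \ge m$, where the set is all of $\fa$, and $n < 0$, where it is empty, are handled trivially.)

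There is no serious obstacle here; the only point requiring a moment's care is the passage from ``$\dim \fc(\vx) \le n$'' to a statement about ranks of a single matrix, and the observation that the entries of that matrix depend polynomially --- in fact linearly --- on $\vx$, so that its minors are genuine regular functions on the affine space $\fa$. The upper semicontinuity of rank (equivalently, the lower semicontinuity of nullity) for a matrix with polynomial entries is standard, and the argument above is essentially a proof of it specialised to our situation. I would close by remarking that this lemma will be used to locate a dense open subset of $\fa$ on which $\dim \circ\, \fc$ attains its generic (minimal) value, namely the rank $l$ of $\fg$, with the bound $s$ controlling the codimension of the locus where the centraliser is larger.
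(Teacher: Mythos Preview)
Your proof is correct and follows essentially the same approach as the paper: both identify $\fc(\vx)=\ker(\mathrm{ad}(\vx))$, translate $\dim\fc(\vx)\le n$ into $\mathrm{rank}\,\mathrm{ad}(\vx)\ge m-n$, and observe that this is the open condition given by non-vanishing of some $(m-n)$-minor. You have simply written out more explicitly the choice of basis and the polynomial nature of the matrix entries, which the paper leaves implicit.
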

\begin{proof}
Observe that
$\fc (\vx)$ is the kernel of the adjoint operator $\text{ad}(\vx)$.
Thus, $\dim (\fc (\vx))\leq n$ is equivalent to 
$\mathrm{rank}(\mathrm{ad}(\vx))\geq m-n$. This is clearly an open condition,
given
by the non-vanishing of one of the $(m-n)$-minors.
\end{proof}

Now we move to 
$\overline{\bK}$, the 
algebraic closure of $\bK$.
Let $\bar{\fa}=\fa\otimes_\bK \overline{\bK}$.
To distinguish centralisers in $\fa$ and $\bar{\fa}$
we denote $\fc (\vx) : = \fc_\fa (\vx)$
and 
$\bar{\fc} (\vx) : = \fc_{\bar{\fa}} (\vx)$. 
Now we assume that $\bar{\fa}$ is the Lie algebra of a
connected algebraic group $\cA$. Let 
$\mathrm{Orb} (\vx)$ be the $\cA$-orbit of an element $\vx\in\bar{\fa}$.

\begin{lemma}
\label{lem2}
Let $\vx$ and  $\vy$ 
be elements of $\bar{\fa}$ 
such that $\vx \in \overline{\mathrm{Orb} (\vy)}$. Then 
$\dim \bar{\fc} (\vx) \geq \dim \bar{\fc} (\vy).$
\end{lemma}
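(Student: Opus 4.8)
The statement says: if $\vx \in \overline{\mathrm{Orb}(\vy)}$, then $\dim \bar{\fc}(\vx) \geq \dim \bar{\fc}(\vy)$. The key observation is that for any $\vz \in \bar{\fa}$, the centraliser $\bar{\fc}(\vz)$ is precisely the Lie algebra of the stabiliser $\cA_\vz = \{g \in \cA \mid \mathrm{Ad}(g)\vz = \vz\}$ under the adjoint action. Hence $\dim \bar{\fc}(\vz) = \dim \cA_\vz = \dim \cA - \dim \mathrm{Orb}(\vz)$, the last equality by the orbit–stabiliser theorem for algebraic group actions. So the claim is equivalent to $\dim \mathrm{Orb}(\vx) \leq \dim \mathrm{Orb}(\vy)$, which is a standard fact about orbit closures: an orbit contained in the closure of another has dimension at most that of the other, because $\overline{\mathrm{Orb}(\vy)}$ is an irreducible $\cA$-stable set, it is a union of orbits, and the orbit $\mathrm{Orb}(\vy)$ is open in it (every orbit is open in its own closure), so any other orbit lies in the boundary $\overline{\mathrm{Orb}(\vy)} \setminus \mathrm{Orb}(\vy)$, which has strictly smaller dimension.

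\medskip
Concretely, I would carry this out in three steps. First, identify $\bar{\fc}(\vz)$ with $\mathrm{Lie}(\cA_\vz)$: the stabiliser $\cA_\vz$ is a closed subgroup of $\cA$, and differentiating the orbit map $g \mapsto \mathrm{Ad}(g)\vz$ at the identity shows its kernel is exactly $\{X \in \bar{\fa} \mid [X,\vz] = 0\} = \bar{\fc}(\vz)$; since $\mathrm{Lie}(\cA_\vz)$ is contained in this kernel and the orbit map factors through $\cA/\cA_\vz$ with the induced map to $\mathrm{Orb}(\vz)$ separable enough to compute dimensions, one gets $\dim \bar{\fc}(\vz) \geq \dim \cA - \dim \mathrm{Orb}(\vz)$ in general, with equality when the characteristic causes no inseparability trouble. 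Second — and this is the one delicate point — in positive characteristic the orbit map need not be separable, so $\mathrm{Lie}(\cA_\vz)$ may be strictly smaller than $\bar{\fc}(\vz)$. The robust statement that still holds is the inequality $\dim \cA_\vz \leq \dim \bar{\fc}(\vz)$ together with $\dim \mathrm{Orb}(\vz) = \dim \cA - \dim \cA_\vz$; combining these gives $\dim \bar{\fc}(\vz) \geq \dim \cA - \dim \mathrm{Orb}(\vz)$ for all $\vz$, which is all we need for a lower bound on $\dim\bar{\fc}(\vx)$. Third, invoke the orbit-closure dimension inequality $\dim \mathrm{Orb}(\vx) \leq \dim \mathrm{Orb}(\vy)$ for $\vx \in \overline{\mathrm{Orb}(\vy)}$, and chain: $\dim \bar{\fc}(\vx) \geq \dim \cA - \dim \mathrm{Orb}(\vx) \geq \dim \cA - \dim \mathrm{Orb}(\vy) = \dim \cA_\vy \cdot$\,(corrected)\,$\,= \dim \bar{\fc}(\vy)$ — wait, here the last equality is again only an inequality the wrong way, so I must instead argue via upper semicontinuity.

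\medskip
On reflection the cleanest route avoids the separability issue entirely by using Lemma~\ref{lem1}. The function $\dim \circ \fc$ on $\bar{\fa}$ is upper semicontinuous (Lemma~\ref{lem1} applied to $\bar{\fa}$), so for each integer $n$ the set $S_n = \{\vz \in \bar{\fa} \mid \dim \bar{\fc}(\vz) \leq n\}$ is open, hence its complement $\{\dim \bar{\fc}(\vz) \geq n+1\}$ is closed. Put $n = \dim \bar{\fc}(\vy) - 1$; then $\vy$ lies in the closed set $C = \{\vz \mid \dim \bar{\fc}(\vz) \geq \dim \bar{\fc}(\vy)\}$. This set $C$ is moreover $\cA$-stable, since $\mathrm{Ad}(g)$ is a Lie algebra automorphism and thus $\bar{\fc}(\mathrm{Ad}(g)\vz) = \mathrm{Ad}(g)\bar{\fc}(\vz)$ has the same dimension as $\bar{\fc}(\vz)$. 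Being closed and $\cA$-stable, $C$ contains $\overline{\mathrm{Orb}(\vy)}$, and in particular $\vx \in \overline{\mathrm{Orb}(\vy)} \subseteq C$, which says exactly $\dim \bar{\fc}(\vx) \geq \dim \bar{\fc}(\vy)$.

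\medskip
\textbf{Main obstacle.} The only real subtlety is making sure Lemma~\ref{lem1} applies verbatim to $\bar{\fa}$ rather than $\fa$ — but the proof of Lemma~\ref{lem1} is characteristic-free and works over any field, so this is immediate. The argument via upper semicontinuity plus $\cA$-invariance of the level sets is completely clean and sidesteps the orbit–stabiliser and separability considerations altogether; I expect no genuine difficulty.
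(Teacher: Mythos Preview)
Your final argument via upper semicontinuity is correct and is essentially the paper's proof: the paper takes the open set $X=\{\vz:\dim\bar{\fc}(\vz)\leq\dim\bar{\fc}(\vx)\}$ from Lemma~\ref{lem1}, observes that $\Orb(\vy)$ must meet it (since $\vx\in X\cap\overline{\Orb(\vy)}$), and concludes via constancy of centraliser dimension on orbits---the dual formulation of your closed-$\cA$-stable-set argument. Your initial detour through orbit--stabiliser is also anticipated in the paper, which remarks immediately after the lemma that this alternative works precisely when $\cA_\vx$ is smooth.
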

\begin{proof} The orbit 
$\mathrm{Orb} (\vy)$ intersects any open neighbourhood of $\vx$,
and, in particular, the set 
$X=\{\vz\in\bar{\fa}\,\mid\, \dim (\bar{\fc} (\vz))\leq \dim (\bar{\fc} (\vx))\}$,
which is open by Lemma~\ref{lem1}.
If $\vz \in \mathrm{Orb} (\vy)\cap X$, then 
\newline
$\dim \bar{\fc} (\vx) \geq \dim \bar{\fc} (\vz) = \dim \bar{\fc} (\vy).$
\end{proof}

The stabiliser subscheme $\cA_\vx$ is, in general, non-reduced
in positive characteristic.
It is reduced (equivalently, smooth)
if and only if the inclusion
$\fc (\vx)\supseteq \mathrm{Lie} (\cA_\vx)$
is an equality (cf. \cite[1.10]{kn::Hum}).
If $\cA_\vx$ is smooth, the orbit-stabiliser
theorem implies that 
$$
\dim (\fa) = m = \dim \cA_\vx 
+ \dim \mathrm{Orb} (\vx)
= \dim \bar{\fc}(\vx) 
+ \dim \mathrm{Orb} (\vx).
$$
In particular, 
Lemma~\ref{lem2} follows from the inequality
$
\dim \mathrm{Orb} (\vx)
\leq \dim \mathrm{Orb} (\vy)
$.

Let us further assume that $\cA=\cG$ is 
a simple connected simply-connected
algebraic group %, $W$ is its Weyl group
and
$\bar{\fa}=\fg$
is a simply-connected Chevalley Lie algebra.
Let us fix a triangular decomposition
$\fg=\fn_-\oplus\fh\oplus\fn$.
An element $\vx\in\fg$ is called {\em semisimple} 
if $\Orb (\vx) \cap \fh \neq \emptyset$.
An element $\vx\in\fg$ is called {\em nilpotent} 
if $\Orb (\vx) \cap \fn \neq \emptyset$.
We call a representation
$\vx=\vx_\ts+\vx_\tn$
{\em a quasi-Jordan decomposition}
if 
$\vx_\ts\in g(\fh)$ (image of $\fh$ under $g$)  and 
$\vx_\tn\in g(\fn)$ for the same $g\in \cG$.

Recall that {\em a Jordan decomposition}
is a quasi-Jordan decomposition
$\vx=\vx_\ts+\vx_\tn$
such that 
$[\vx_\ts,\vx_\tn]=0$.
A Jordan decomposition exists and is unique 
if $\fg$ admits a non-degenerate
bilinear form
\cite[Theorem 4]{kn::KW}.

Notice that part (1) of the following lemma
cannot be proved by the argument that
the Lie subalgebra $\bK\vx$ is contained in a maximal soluble
subalgebra:
in characteristic $2$ the Borel subalgebra $\fb= \fh\oplus\fn$
is not maximal soluble.

%\begin{theorem}
%Suppose that $l\geq 2$. Suppose further\footnote{Here I am not yet
%  sure. I will sort it out.} that $p\neq 2$ or $\cG$ is not of type
%$C_l$.
%The following statements hold.
%\begin{enumerate}
%\item The restriction of functions gives an algebra isomorphism
%$\overline{\bK}[\bar{\fg}]^\cG \rightarrow \overline{\bK}[\fh]^W$.
%\item $\Orb (\vx)$ is closed if and only if $\vx$ is semisimple.
%\item $\fn$ is the set of nilpotent elements of the Borel subalgebra $\fb\coloneqq \fh\oplus\fn$.
%\item $\vx\in\bar{\fg}$ is nilpotent  if and only if $f(\vx)=f(0)$ for
%  all $f\in\overline{\bK}[\bar{\fg}]^\cG$.
%\item Every $\vx\in\bar{\fg}$ admits a unique Jordan decomposition
%  $\vx=\vx_\ts+\vx_\tn$.
%\item $\vx_\ts$ belongs to the orbit closure $\overline{\Orb(\vx)}$.
%\end{enumerate}
%\end{theorem}
\begin{lemma}
\label{qjordan}
Assume
that $p\neq 2$ or $\cG$ is not of type
$C_l$ (in particular, this excludes $C_2=B_2$ and $C_1=A_1$).
Then the following statements hold.
\begin{enumerate}
\item Every $\vx\in\fg$ admits a (non-unique) quasi-Jordan decomposition
  $\vx=\vx_\ts+\vx_\tn$.
\item $\vx_\ts$ belongs to the orbit closure $\overline{\Orb(\vx)}$.
\item If $\Orb (\vx)$ is closed,  then $\vx$ is semisimple.
\item $\dim \bar{\fc} (\vx_{s}) \geq \dim \bar{\fc} (\vx)$. 
\end{enumerate}
\end{lemma}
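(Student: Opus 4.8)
The plan is to prove part~(1) first; parts (2)--(4) then follow quickly. Part~(1) is equivalent to the assertion that every $\vx\in\fg$ lies in some Borel subalgebra $\fb'$: writing $\fb'=\mathrm{Ad}(g)\fb$ and $\mathrm{Ad}(g^{-1})\vx=h+n\in\fh\oplus\fn$, the pair $\vx_\ts:=\mathrm{Ad}(g)h\in g(\fh)$, $\vx_\tn:=\mathrm{Ad}(g)n\in g(\fn)$ is a quasi-Jordan decomposition. I would prove this assertion by a Grothendieck--Springer type argument. Let $\cB\le\cG$ be the Borel subgroup with $\mathrm{Lie}(\cB)=\fb$, and form the incidence variety
$$
\tilde\fg=\{(\vx,\fb')\mid \fb'\text{ a Borel subalgebra of }\fg,\ \vx\in\fb'\}\;\cong\;\cG\times^{\cB}\fb,
$$
a vector bundle over the projective flag variety $\cG/\cB$. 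The first projection $\pi\colon\tilde\fg\to\fg$ factors through the closed immersion $\tilde\fg\hookrightarrow(\cG/\cB)\times\fg$ followed by the proper projection to $\fg$, so $\pi$ is proper and $\mathrm{im}(\pi)$ is a closed, $\cG$-stable subset of $\fg$. Since $\mathrm{im}(\pi)$ is closed it suffices to show that $\pi$ is dominant (note $\dim\tilde\fg=|\Phi^{+}|+(l+|\Phi^{+}|)=\dim\fg$).

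To show dominance I would produce enough semisimple elements. Call $t\in\fh$ \emph{regular} if $\alpha(t)\ne 0$ for every root $\alpha$ viewed as a linear form on $\fh$; then $\bar{\fc}(t)=\fh$, and since $\mathrm{Ad}(T)$ fixes $\fh$ pointwise we get $T\subseteq C_{\cG}(t)$, whence $\dim C_{\cG}(t)=l$ (its Lie algebra being contained in $\bar{\fc}(t)=\fh$). A regular $t$ exists as soon as no root is identically zero on $\fh$, i.e. no root lies in the left null space of the Cartan matrix modulo $p$; inspecting the tolerable primes one finds this holds \emph{except} for type $C_{l}$ with $p=2$, where the long roots $2e_{1},\dots,2e_{l}$ all vanish on $\fh$ --- and that is exactly the case barred by the hypothesis. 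Granting a regular $t$, the set $\fh^{\mathrm{reg}}=\fh\setminus\bigcup_{\alpha}\ker\alpha$ is a nonempty open subset of $\fh$ (a finite union of proper hyperplanes cannot cover a space over the infinite field $\overline{\bK}$), and the morphism $\cG\times\fh^{\mathrm{reg}}\to\fg$, $(g,t')\mapsto\mathrm{Ad}(g)t'$, has generic fibre of dimension $l$: a fibre is a finite union of cosets of the groups $C_{\cG}(t')$, finite because $N_{\cG}(T)/T=W$. By Chevalley's theorem on fibre dimensions its image is then constructible of dimension $(\dim\cG+l)-l=\dim\fg$, hence dense in the irreducible variety $\fg$. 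As $\mathrm{im}(\pi)$ is closed and contains $\mathrm{Ad}(\cG)\,\fh^{\mathrm{reg}}$, we conclude $\mathrm{im}(\pi)=\fg$, proving~(1).

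Parts (2)--(4) are now short. Conjugating, we may assume $\vx_\ts\in\fh$ and $\vx_\tn\in\fn$. Choose $\lambda=2\rho^{\vee}\colon\mathbb{G}_{m}\to T$, so that $\langle\alpha,\lambda\rangle>0$ for every $\alpha\in\Phi^{+}$; then $\mathrm{Ad}(\lambda(\tau))$ fixes $\fh$ pointwise and scales $\fg_{\alpha}$ by $\tau^{\langle\alpha,\lambda\rangle}$, so $\mathrm{Ad}(\lambda(\tau))\vx=\vx_\ts+\mathrm{Ad}(\lambda(\tau))\vx_\tn\to\vx_\ts$ as $\tau\to0$. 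The morphism $\mathbb{A}^{1}\to\fg$ extending $\tau\mapsto\mathrm{Ad}(\lambda(\tau))\vx$ thus sends $0$ to $\vx_\ts$ and $\mathbb{G}_{m}$ into $\Orb(\vx)$, giving $\vx_\ts\in\overline{\Orb(\vx)}$; this is~(2). For~(3): if $\Orb(\vx)$ is closed then $\vx_\ts\in\Orb(\vx)$ by~(2), say $\vx_\ts=\mathrm{Ad}(h)\vx$, and since $\vx_\ts\in g(\fh)$ we obtain $\mathrm{Ad}(g^{-1}h)\vx\in\fh$, so $\Orb(\vx)\cap\fh\ne\emptyset$ and $\vx$ is semisimple. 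Finally~(4) follows from~(2) and Lemma~\ref{lem2} applied to $\vx_\ts\in\overline{\Orb(\vx)}$.

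I expect the main obstacle to be surjectivity of $\pi$ in small characteristic: in contrast with characteristic zero there is no automatic reason for an arbitrary element to sit in a Borel subalgebra, and the density step genuinely needs a regular element of $\fh$, whose existence is precisely what excluding $C_{l}$ with $p=2$ secures. One must also run the dimension count via constructible sets and Chevalley's theorem rather than the orbit--stabiliser theorem, since in positive characteristic the stabiliser schemes $\cG_{\vx}$ may be non-reduced.
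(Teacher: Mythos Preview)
Your argument is correct and follows the same architecture as the paper's proof: for (1) you both show $\bigcup_{g}g(\fb)=\fg$ by combining properness of the Grothendieck--Springer projection with a density argument hinging on the existence of a regular semisimple element in $\fh$ (the paper checks that the differential of $\cG\times\fh\to\fg$ at $(e,\vh)$ is surjective, whereas you run a fibre-dimension count through Chevalley's theorem), and for (2) you both contract $\vx_\tn$ to zero via the torus (the paper extends the full $\cT$-action to $\overline{\bK}^{\,l}$, while you use the single cocharacter $2\rho^\vee$). Parts (3) and (4) are then deduced identically from (1), (2) and Lemma~\ref{lem2}.
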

\begin{proof} (cf. \cite[Section 3]{kn::KW}.) (1) 
Our assumption on $\fg$ assures the existence of a regular
semisimple element $\vh\in\fh$, i.e., an element such
that $\bar{\fc} (\vh)=\fh$. 
The differential 
${\mathrm d}_{(e,\vh)}a : \fg \oplus \fh \rightarrow \fg$
of the action map
$a:\cG \times \fh \rightarrow \fg$ is given by
the formula
$$
{\mathrm d}_{(e,\vh)}a (\vx,\vk) = [\vx,\vh]+\vk . 
$$
Since the adjoint operator $\mathrm{ad}(\vh)$ 
is a diagonalizable operator whose
0-eigenspace is $\fh$, 
the kernel of ${\mathrm d}_{(e,\vx)}a$
is $\fh\oplus 0$.
Hence, the image of $a$ contains an open
subset of $\fg$.
Since the set $\cup_{g\in\cG} g(\fb)$ 
contains the image of $a$, it is a dense subset of $\fg$.

Let $\cB$ be the Borel subgroup of $\cG$
whose Lie algebra is $\fb$.
The quotient space $\cF=\cG/\cB$ 
is a flag variety. Since $\cF$ is projective,
the projection map $\pi: \fg \times \cF \rightarrow \fg$
is proper.
The Springer variety
$\cS=\{ (\vx,g(\cB)) \,|\, \vx\in g(\fb) \}$
is closed in $\fg \times \cF$.
Hence, $\cup_{g\in\cG} g(\fb) = \pi (\cS)$
is closed in $\fg$.
Thus, $\cup_{g\in\cG} g(\fb) = \fg$.
Choosing $g$ such that $\vx\in g(\fb)$ gives a decomposition.

(2) Suppose $\vx_\ts\in g(\fh)$. Let $\cT$ be the torus whose Lie
algebra is $g(\fh)$. We decompose $\vx$ over the roots of $\cT$:
$$
\vx = \vx_\ts + \vx_\tn 
= \vx_0 + \sum_{\alpha\in Y(\cT)} \vx_\alpha. 
$$
We can choose a basis of $Y(\cT)$
so that only positive roots appear. Hence, the action map
$a: \cT \rightarrow \fg$, 
$a(t) = t(\vx)$
extends alone the embedding $\cT\hookrightarrow \bK^l$
to a map
$\widehat{a}: \bK^l \rightarrow \fg$.
Observe that $\vx_\ts = \widehat{a}(0)$.

Let $U\ni \vx_\ts$ be an open subset of $\fg$.
Then ${\widehat{a}\,}^{-1}(U)$ is open in $\bK^l$
and $\cT\cap {\widehat{a}\,}^{-1}(U)$ is not empty.
Pick $t\in \cT\cap {\widehat{a}\,}^{-1}(U)$.
Then $a(t)=t(\vx)\in U$, thus, 
$\vx_\ts \in
\overline{\cT(\vx)}\subseteq 
\overline{\Orb(\vx)}$.

(3) This immediately follows from (1) and (2). 

(4) This immediately follows from (2) and Lemma~\ref{lem2}. 
\end{proof}

If $\alpha$ is a long simple root,
its root vector $\ve_{\alpha}\in{\fg}=\bar{\fa}$
is known as {\em the minimal nilpotent}.
The dimension of $\mathrm{Orb} (\ve_\alpha)$
is equal to  $2( h^\vee -1)$
%where $h^\vee$ is the dual Coxeter number
(cf. \cite{kn::Wa}).

\begin{prop}
\label{Dual_Cox}
Suppose that $l\geq 2$
and that the characteristic $p$ of the field $\bK$ is tolerable 
for
$\fg$.
Then for any noncentral $\vx\in\fa$ \ 
$$
\dim \fc (\vx) \leq \dim \bar{\fc} (\ve_\alpha) = m - 2( h^\vee -1).
$$
%The dimension of the orbit of a minimal nilpotent is minimal.
\end{prop}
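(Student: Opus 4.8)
The plan is to pass to the algebraic closure, use the quasi-Jordan machinery to reduce to two extreme cases, and then reduce the decisive case to a combinatorial estimate on the root system.

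Since $\fc(\vx)=\ker(\mathrm{ad}\,\vx)$, the quantity $\dim\fc(\vx)$ is the corank of $\mathrm{ad}\,\vx$ and so is unchanged under the base change $\bK\hookrightarrow\overline{\bK}$; thus $\dim\fc(\vx)=\dim\bar{\fc}(\vx)$, and it suffices to bound $\dim\bar{\fc}(\vx)$ for non-central $\vx\in\fg=\bar{\fa}$. As $p$ is tolerable and $l\geq 2$, $\cG$ is not of type $C_l$ with $p=2$, so Lemma~\ref{qjordan} applies; fix a quasi-Jordan decomposition $\vx=\vx_\ts+\vx_\tn$. By part~(4) of that lemma, $\dim\bar{\fc}(\vx)\leq\dim\bar{\fc}(\vx_\ts)$: if $\vx_\ts\notin\fz$ we are reduced to a non-central semisimple element, and if $\vx_\ts\in\fz$ then $\bar{\fc}(\vx)=\bar{\fc}(\vx-\vx_\ts)=\bar{\fc}(\vx_\tn)$ with $\vx_\tn$ a nonzero nilpotent, which is the other case.

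In the nilpotent case I would use that all long root vectors lie in one $\cG$-orbit (the Weyl group is transitive on long roots and a maximal torus acts on each root space by all nonzero scalars), and that this minimal nilpotent orbit $\Orb(\ve_\alpha)$ lies in the closure of every nonzero nilpotent orbit; then $\ve_\alpha\in\overline{\Orb(\vx)}$ and Lemma~\ref{lem2} yields $\dim\bar{\fc}(\vx)\leq\dim\bar{\fc}(\ve_\alpha)$. In the semisimple case, conjugating by $\cG$ I may assume $\vx\in\fh$, so $\mathrm{ad}(\vx)$ is semisimple and $\bar{\fc}(\vx)=\fh\oplus\bigoplus_{\beta(\vx)=0}\fg_\beta$, whence $\dim\bar{\fc}(\vx)=l+|\Phi_0|$ with $\Phi_0=\{\beta\in\Phi:\beta(\vx)=0\}$; this is a proper subsystem because $\vx\neq 0$ and $\Phi$ spans $\fh^\ast$. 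So the statement reduces to the root-system inequality $|\Phi\setminus\Phi_0|\geq 2(h^\vee-1)$ for every such $\Phi_0$, together with the computation $\dim\bar{\fc}(\ve_\alpha)=m-2(h^\vee-1)$.

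For the latter, I would bring in the $\fsl_2$-triple $(\ve_\alpha,\alpha^\vee,\ve_{-\alpha})$: since $\alpha$ is long, $\mathrm{ad}(\alpha^\vee)$ grades $\fg$ with weights in $\{-2,-1,0,1,2\}$ and one-dimensional extreme pieces $\fg_{\pm\alpha}$, and a short computation with $\alpha$-strings (the structure constants involved all being $\pm1$, so this is characteristic-free) gives $\mathrm{rank}(\mathrm{ad}\,\ve_\alpha)=2(h^\vee-1)$, consistent with the recalled $\dim\Orb(\ve_\alpha)=2(h^\vee-1)$. The hard part is the inequality $|\Phi\setminus\Phi_0|\geq 2(h^\vee-1)$ --- equivalently, that the minimal nonzero $\cG$-orbit in $\fg$ is the minimal nilpotent orbit. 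Here $\Phi_0=\Phi\cap\ker(\vx)$ is cut out of $\Phi$ by a subspace, so by a $\bQ$-rationality argument and the $W$-action it lies inside a standard maximal Levi subsystem $\Phi_{I\setminus\{i\}}$; writing $\beta=\sum_j c_j(\beta)\alpha_j$ and $n_i=\#\{\beta\in\Phi^+:c_i(\beta)\geq1\}$, one has $|\Phi|-|\Phi_{I\setminus\{i\}}|=2n_i$, so the inequality becomes $n_i\geq h^\vee-1$ for every $i$. Since this involves only the root system and not $p$, I would prove it uniformly --- at a cominuscule node $n_i=\langle2\rho,\varpi_i^\vee\rangle$, and a general node is handled through the grading by $\mathrm{ad}(\varpi_i^\vee)$ --- or, as a fallback, check it over the finite list $A_l,B_l,C_l,D_l,E_6,E_7,E_8,F_4,G_2$.
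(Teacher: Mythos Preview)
Your overall architecture---pass to $\overline{\bK}$, apply the quasi-Jordan decomposition, split into the semisimple and nilpotent cases---is exactly the paper's. The gap is in the semisimple case, where you assert that $\Phi_0=\{\beta\in\Phi:\beta(\vx)=0\}$ can be pushed inside a standard maximal Levi subsystem $\Phi_{I\setminus\{i\}}$ by a $\bQ$-rationality argument, and then claim the resulting inequality ``involves only the root system and not $p$''. Both statements fail in positive characteristic: the vanishing $\beta(\vx)=0$ is taken in $\bK$, so $\Phi_0$ is a closed symmetric subsystem but need not be parabolic. Concretely, take $\fg$ of type $G_2$ with $p=2$ (tolerable) and $\vx=\vy_1\in\fh^\flat$, so $\alpha_1(\vx)=1$, $\alpha_2(\vx)=0$; then $(2\alpha_1+\alpha_2)(\vx)=2=0$ and $\Phi_0=\{\pm\alpha_2,\pm(2\alpha_1+\alpha_2)\}\cong A_1\times A_1$, which is not contained in any proper Levi subsystem of $G_2$. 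So your reduction to $n_i\geq h^\vee-1$ does not cover all cases, and your fallback ``check it over the finite list'' would have to run over pairs (type, $p$), not just types. This is precisely what the paper does: it passes to $\fh^\flat$ via the canonical map $\varphi$ and then, in the table of Appendix~\ref{table1}, exhibits for each relevant $(\fg,p)$ a noncentral $\vy\in\fh^\flat$ with maximal $\dim\fc(\vy)$ and verifies it is at most $m-2(h^\vee-1)$.

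A smaller issue sits in your nilpotent case: the assertion that $\Orb(\ve_\alpha)$ lies in the closure of every nonzero nilpotent orbit is standard in good characteristic but is not automatic in the bad tolerable cases ($p=2$ for $D_l,E_6,E_7,E_8,G_2$; $p=3$ for $E_6,E_7,E_8,F_4$; $p=5$ for $E_8$). The paper is more cautious here: it first reduces to a \emph{minimal} nonzero nilpotent orbit, then uses a torus degeneration together with $\bK^\times\vy\subseteq\Orb(\vy)$ (the latter established via Premet's Jacobson--Morozov in good characteristic and via the VIGRE tables \cite{kn::VIGRE} in bad tolerable characteristic) to show the closure contains some root vector $\ve_\beta$, and finally computes $\dim\bar{\fc}(\ve_\beta)$ directly (again invoking \cite{kn::VIGRE} in the exceptional bad cases). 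Your $\fsl_2$-string argument for $\dim\bar{\fc}(\ve_\alpha)=m-2(h^\vee-1)$ is fine, but you should not assume the minimal-orbit-in-every-closure fact without addressing these characteristics.
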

\begin{proof} 
Let $\vx \in \fa$ ($\vy \in \fg$) be a noncentral element with $\fc
(\vx)$ 
($\bar{\fc} (\vy)$ correspondingly)  
of
the largest possible dimension. 
Observe that
$
\dim \fc (\vx) \leq \dim \bar{\fc} (\vx) \leq \dim \bar{\fc} (\vy)$.

Let us examine a quasi-Jordan decomposition $\vy=\vy_\ts+\vy_\tn$.
%We can decompose $\vx$ into its semisimple and
%nilpotent parts as $\vx = \vx_{ss} + \vx_{n}$. Then we have 
Since $\vy_{s}\in \overline{\text{Orb} (\vy)}$, 
we conclude that
$\dim \bar{\fc} (\vy_{s}) \geq \dim \bar{\fc} (\vy)$. But 
$\dim \bar{\fc}
(\vy)$ is assumed to be maximal. 
There are two ways to reconcile this:
either $\dim \bar{\fc} (\vy_{s}) = \dim \bar{\fc} (\vy)$,
or $\vy_\ts$ is central.
%Hence,  either $\vx_{s} = \vx$,
%i.e. $\vx$ is semisimple, or $\vx_{s} = 0$, i.e. $\vx$ is nilpotent. 

Suppose $\vy_\ts$ is central. Then $\vy$ and $\vy_\tn$
have the same centralisers. We may assume that
$\vy=\vy_\tn$ is nilpotent. Lemma~\ref{lem2}
allows us to assume
without loss of generality that the orbit 
${\text{Orb} (\vy)}$ is minimal, that is, 
$\overline{\text{Orb} (\vy)} = {\text{Orb} (\vy)}\cup\{0\}.$
On the other hand, the closure
$\overline{\text{Orb} (\vy)}$ contains a root vector $\ve_\beta$.

Let us prove the last statement.
First, observe that 
$\bK^\times \vy \subseteq \text{Orb} (\vy)$.
If $p$ is good, this immediately follows
from Premet's version of Jacobson-Morozov Theorem \cite{Pr}.
If $\text{Orb} (\lambda\vy)\neq \text{Orb} (\vy)$
in an exceptional Lie algebra in a bad tolerable characteristic,
then we observe two distinct nilpotent orbits 
with the same partition into Jordan blocks.
It never occurs: all the partitions are 
listed 
in the VIGRE paper
\cite[section 6]{kn::VIGRE}.
The remaining case of $p=2$ and $\fg$ is of type $D_l$
is also settled in the  VIGRE paper
\cite{kn::VIGRE}.
Now let $\vy\in g(\fn)$, and  $\cT_0$ be the torus whose Lie
algebra is $g(\fh)$. 
Consider $\cT\coloneqq \cT_0 \times \bK^\times$
with the second factor acting on $\fg$ via the vector space structure.
Write $\vy=\sum_{\beta\in Y(\cT_0)} \vy_\beta$
using the roots of $\cT_0$. The closure 
of the orbit $\overline{\cT(\vy)}$ is 
contained in $\overline{\text{Orb} (\vy)}$.
Let us show that $\overline{\cT(\vy)}$ 
contains one of $\vy_\beta$. 
Let us write $\cT_0=G_\tm\times G_\tm \times \ldots \times G_\tm$ and decompose
$\vy=\vy_k+\vy_{k+1}+\ldots + \vy_n$ using the weights of the first
factor $G_\tm$ with $\vy_k\neq 0$. Then
$$
\cT(\vy) \supseteq
\{ (\lambda,1,1\ldots,1,\lambda^{-k})\cdot \vy| \lambda \in \bK^\times \} =
\{ \vy_k+\lambda^1 \vy_{k+1}+\ldots + \lambda^{n-k}\vy_n | \lambda \in \bK^\times \}.
$$  
Hence, $\vy_k \in \overline{\cT(\vy)}$.
Repeat this argument with $\vy_k$ instead of $\vy$
for the second factor of $\cT_0$, and so on.
At the end we arrive at nonzero $\vy_\beta$, hence, 
$\ve_\beta\in \overline{\text{Orb} (\vy)}$. 

Without loss of
generality we now assume that $\vy=\ve_\beta$ for a simple root
$\beta$. 
If $p$ is good, then $\dim(\bar{\fc} (\ve_\beta))$ does not depend on the
field:
$$
\bar{\fc} (\ve_\beta) =
\ker (d\beta : \fh \rightarrow \bK) \oplus 
\bigoplus_{\gamma+\beta \mbox{ is not a root}} \fg_{\gamma}.
$$
In particular, it is as in characteristic zero: the long root vector
has a larger centraliser then the short root vector and
%all the stabilisers are smooth and 
%$\vy$ belongs to $\Orb (\ve_\alpha )$. 
%must be the minimal nilpotent.
%Hence,
$
\dim \bar{\fc} (\vy) = \dim \bar{\fc} (\ve_\alpha) = m - 2( h^\vee -1)
$
\cite{kn::Wa}.
If $p=2$ and $\fg$ is of type $D_l$,
then %$\vy$ is also a minimal nilpotent.
a direct calculation gives 
the same formula for $\dim \bar{\fc} (\ve_\alpha)$.
%the correct dimension of the centraliser.
In the exceptional cases in bad characteristics the orbits and 
their centralisers are computed in the VIGRE paper
\cite{kn::VIGRE}.
One goes through their tables and establishes the formula
for $\dim \bar{\fc} (\vy)$ in all the cases.

Now suppose $\dim \bar{\fc} (\vy_{s}) = \dim \bar{\fc} (\vy)$.
We may assume that $\vy=\vy_\ts$ is semisimple.
Then $\vy$ is in some Cartan subalgebra $g^{-1}(\fh)$ and
$\dim \bar{\fc} (g(\vy)) = \dim \bar{\fc} (\vy)$.
Moreover, 
$$\bar{\fc} (g(\vy)) = \fh \oplus \bigoplus_{ \{ \alpha | \alpha(g(\vy)) = 0 \} }
\fg_{\alpha} 
$$ 
is a reductive subalgebra.
If $\varphi :\fg 
\rightarrow 
\fg^\flat$
is the canonical map (see Section~\ref{rid_small}),
then $\dim \bar{\fc} (g(\vy)) = \dim \fc_{\fg^\flat} (\varphi(g(\vy)))$.
It remains 
to examine the Lie algebras case by case 
and exhibit a non-zero element in $\fh^\flat$ with the maximal 
dimension of centraliser. This is done in Appendix~\ref{table1}.
\end{proof}

Now we can give a crucial dimension estimate for the proof of Theorem~\ref{theorem::2}.
 
\begin{prop} 
\label{my_estimate}
Let $\fa$ be an $m$-dimensional Lie algebra
with an associative bilinear form
$\eta$,
whose kernel $\fa^0$ is the centre of $\fa$.
Suppose $r=\dim(\fa^0)$ and 
$k\geq\dim (\fc (\vx))$ 
for
any non-central element $\vx\in\fa$.
Finally, let $U, V$ be subspaces of $\fa$ such that
$\dim (U)+\dim (V) > m+k+r$. Then
$[U,V] = \fa.$
\end{prop}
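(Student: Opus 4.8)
The plan is to prove the contrapositive: assume $[U,V] \neq \fa$ and deduce $\dim(U) + \dim(V) \leq m + k + r$. Since $\eta$ is an associative (invariant) form, the adjoint operators behave like symmetric operators with respect to $\eta$, and this lets me convert the condition $[U,V] \subsetneq \fa$ into an orthogonality statement. Concretely, $[U,V] \neq \fa$ means there is a nonzero functional on $\fa$ vanishing on $[U,V]$, and because $\fa^0$ is exactly the radical of $\eta$, I can try to represent \emph{most} functionals by $\eta$. The subtlety is that a functional vanishing on $[U,V]$ need not be $\eta(\vz, -)$ for some $\vz$; it will be such only modulo the annihilator of $\fa^0$. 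So I would first argue that either $[U,V] + \fa^0 = \fa$ — in which case I can work with the quotient, where $\eta$ descends to a nondegenerate form — or else $[U,V] + \fa^0 \neq \fa$, and then I pick $\vz \notin \fa^0$ with $\eta(\vz, [U,V]) = 0$.

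So the key move: choose $\vz \in \fa$, $\vz \notin \fa^0$, with $\eta(\vz, [u,v]) = 0$ for all $u \in U$, $v \in V$. By associativity of $\eta$, $\eta(\vz,[u,v]) = \eta([\vz,u],v) = -\eta([u,\vz],v)$, so this says $\eta([\vz, U], V) = 0$, i.e.\ $[\vz, U] \subseteq V^{\perp}$, the $\eta$-orthogonal complement of $V$. Now $[\vz, U] = \mathrm{ad}(\vz)(U)$ has dimension $\dim(U) - \dim(U \cap \fc(\vz)) \geq \dim(U) - \dim(\fc(\vz)) \geq \dim(U) - k$, using that $\vz$ is noncentral. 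On the other hand $\dim(V^{\perp}) \leq m - \dim(V) + r$, since $V^{\perp} \supseteq \fa^0$ forces the orthogonality pairing to have rank at most $\dim(V) - \dim(V\cap\fa^0)\geq \dim V - r$ on the relevant quotient; more carefully, $\dim V^\perp = m - \dim V + \dim(V \cap \fa^0)\le m-\dim V + r$. Combining $\dim(U) - k \leq \dim([\vz,U]) \leq \dim(V^{\perp}) \leq m - \dim(V) + r$ gives exactly $\dim(U) + \dim(V) \leq m + k + r$, the desired contradiction.

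It remains to handle the case $[U,V] + \fa^0 = \fa$. Here I pass to $\bar{\fa} = \fa/\fa^0$, which carries the nondegenerate form $\bar\eta$ induced by $\eta$, has dimension $m - r$, and satisfies $[\bar U, \bar V] = \bar{\fa}$ where $\bar U, \bar V$ are the images. But this does not immediately close the argument, because I still need to lift back to $\fa$. Instead I think the cleaner route is: if $[U,V] \neq \fa$, then $[U,V]^{\perp} \neq 0$ in the sense that there is a nonzero $\vz$ (possibly in $\fa^0$) with $\eta(\vz,[U,V])=0$ — this is automatic since $\fa^0 \subseteq [U,V]^\perp$ always, but a \emph{noncentral} such $\vz$ exists precisely when $[U,V] + \fa^0 \neq \fa$, because $[U,V]^\perp \supsetneq \fa^0$ iff $[U,V] \subsetneq {}^\perp(\fa^0) = \fa$... wait, ${}^\perp(\fa^0) = \fa$ since $\fa^0$ is the radical. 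The correct statement: $[U,V]^\perp = \{ \vz : \eta(\vz, [U,V]) = 0\}$ has dimension $m - \dim[U,V]$ only if $[U,V] \supseteq \fa^0$; in general $\dim [U,V]^\perp = m - \dim([U,V] + \fa^0) + \dim(\fa^0) = m - \dim([U,V]+\fa^0)+r$. So $[U,V] \neq \fa$ gives $\dim([U,V]+\fa^0) \leq m$, hence if moreover $[U,V]+\fa^0 \neq \fa$ we get a noncentral $\vz$ and run the above; whereas if $[U,V]+\fa^0 = \fa$ but $[U,V] \neq \fa$, I would need a separate short argument — likely that $\fa^0 = \fz(\fa) \subseteq [U,V]$ can in fact be forced, since $\fa^0$ being central and in the image, or alternatively reducing modulo a complement. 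I expect this borderline case to be the main obstacle; the generic argument via associativity of $\eta$ and the centraliser bound is straightforward, but reconciling the radical $\fa^0$ with the commutator carefully — making sure the $+r$ is placed correctly and that the degenerate directions are genuinely accounted for — is the delicate bookkeeping that the proof must get right.
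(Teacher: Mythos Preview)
Your core argument --- assume $[U,V]\neq\fa$, choose a noncentral $\vz$ with $\eta(\vz,[U,V])=0$, use invariance to obtain $[\vz,U]\subseteq V^\perp$, and combine $\dim[\vz,U]\geq\dim U-k$ with $\dim V^\perp\leq m-\dim V+r$ --- is exactly the paper's proof. The paper phrases it with the whole subspace $W=[U,V]^\perp$ in place of a single $\vz$: it observes $U\subseteq[V,W]^\perp$ by associativity, picks a noncentral $\vx\in W$ to get $\dim[V,W]\geq\dim V-k$, and concludes $\dim U\leq\dim[V,W]^\perp\leq m+k+r-\dim V$. This is the same computation with $U$ and $V$ interchanged.

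Your worry about the edge case $[U,V]+\fa^0=\fa$ with $[U,V]\neq\fa$ is justified, and you should stop trying to patch it: the paper does not handle it either. The paper simply asserts $W=[U,V]^\perp\neq\fa^0$, which is equivalent to $[U,V]+\fa^0\neq\fa$, and offers no justification. In fact the proposition is false at the stated level of generality. Take $\fa=\fsl_3(\bK)\oplus\bK c$ in characteristic zero, with $c$ central and $\eta$ the trace form on the $\fsl_3$ summand extended by zero. Then $\fa^0=\bK c$ equals the centre, so $m=9$ and $r=1$; every noncentral element has centraliser of dimension at most $5$, so $k=5$ works. With $U=V=\fsl_3\oplus 0$ one has $\dim U+\dim V=16>15=m+k+r$, yet $[U,V]=\fsl_3\neq\fa$. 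So your second paragraph already reproduces everything the paper actually proves, and the residual case you flagged is a genuine gap in the proposition as stated, not a defect in your argument.
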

\begin{proof}
Suppose not. 
Let us consider the orthogonal complement
$W= [U,V]^\perp \neq \fa^0$
under the form $\eta$.
Observe that $U\subseteq [V,W]^\perp$ since $\eta$ is associative.
But $W$ admits a noncentral element $\vx\in W$ so that 
$\dim (\fc(\vx))\leq k$. Hence
$$
\dim ([V,W]) \geq \dim (V) -k
\ \mbox{ and } \ 
\dim ([V,W]^\perp) \leq m+k+r - \dim (V).
$$
Inevitably,
$
\dim (U) \leq m+k+r - \dim (V).
$
\end{proof}

We may now prove the first part of Theorem~\ref{theorem::2}.
%\newline
%{\em Proof of Theorem~\ref{theorem::2}.}
%The dimension of the the centraliser of the minimal element is
%$n-(2h^\vee -2)$.
%Thus, if $\dim (U) + \dim (V) > 2n-(2h^\vee -2)$, we are done:
We use $m$, $l$, $r$ and $s$ as in
Definition~\ref{defn::v}.
If $\dim (U) + \dim (V) > m+s+r$, 
we are done by Proposition~\ref{my_estimate}:
$$
\cod ([U,V]) = 0 \leq (1+v(\fg))(\cod (U) + \cod (V)).
$$
%Now we assume that $\dim (U) + \dim (V) \leq 2n-(2h^\vee -2)$.
Now we assume that $\dim (U) + \dim (V) \leq m+s+r$.
It is known \cite{kn::ANS}
that
$$
\cod ([U,V]) \leq l + \cod (U) + \cod (V).
$$
It remains to notice that
$
l = v(\fg) (m-s-r) \leq v (\fg) (\cod (U) + \cod (V)).
%l = v (2h^\vee -2) \leq v (\cod (U) + \cod (V)).
$
The theorem is proved.

\section{Proof of Theorem~\ref{theorem::2}: Rank 2} 
In this section $\mathbf{G}$ is a Chevalley group scheme of rank $2$.
%Let $\bK$ be a field of characteristic $0$ or $p$ and $G=\mathbf{G}(\bK)$ be a simple Chevalley group over $\bK$ of rank $2$.
%As before $\fg$ denotes its Lie algebra with $m\coloneqq \dim\fg$.
%We further assume that the characteristic $p$ is very good for $\fg$, that is
%we avoid $p=2$ if $G$ is of types $C_2$ and $G_2$,
%and  $p=3$ if $G$ is of types $A_2$ and $G_2$.
%Let $\bK$ be a field of characteristic $0$ or $p$ and $G=\mathbf{G}(\bK)$ be a simple Chevalley group over $\bK$.
%Suppose now that the rank of $G$ is $2$ and let $\fg$ be its Lie algebra.
%Denote by $n$ the dimension of $\fg$. 
The characteristic $p$ of the field $\bK$ is zero or very good for $\fg$.
%we avoid $p=2$ if $G$ is of types $B_2$, $G_2$,
%and 
%$p=3$ if $G$ is of types $A_2$, $G_2$.
Let $\{\alpha, \beta\}$ be the set of simple roots of $\fg$ with $|\beta|\leq|\alpha|$. 
If $\fg$ is of type $A_2$ then $\alpha$ and $\beta$ have the same length. 
The group $\cG=\mathbf{G}(\overline{\bK})$ acts on on $\fg$ via the adjoint
action. 
By $\fc (\vx)$ we denote the centraliser $\fc_\fg (\vx)$
in this section. 
Let us summarise some standard facts about this adjoint action (cf.
\cite{kn::Hum}).  
\begin{enumerate}
\item If $\vx \in \fg$, the stabiliser $\cG_\vx$ is smooth, i.e., its Lie algebra
is the centraliser $\fc (\vx)$.
\item The dimensions  
$ \dim (\Orb (\vx))= \dim (\cG) -\dim (\fc  (\vx))$ and $\dim (\fc  (\vx))$ are even.
\item If $\vx\neq 0$ is semisimple, $\dim (\fc(\vx))\in \{ 2,4\}$.
Hence, $\dim (\Orb(\vx))\in \{ m-2,m-4\}$.
\item 
%Suppose  that $\vx \in \fg$ and $\vx =\vx_\ts +\vx_\tn$ is its Jordan
%decomposition.
%Then $G\vx_\ts \subseteq \overline{G \vx}$. In particular, 
%$\dim (c(\vx_\ts)) > \dim  (c(\vx))$
%unless $\vx=\vx_\ts$.
%Since a non-zero semisimple element $\vx_\ts$ has 
%$\dim (c(\vx_\ts))\in \{ 2,4\}$,
A truly mixed element $\vx =\vx_\ts +\vx_\tn$ 
(with non-zero semisimple and nilpotent parts)
is regular, i.e., $\dim (\fc (\vx))=2$ (cf. Lemma~\ref{qjordan}).
\item $\vx$ is nilpotent if and only if $\overline{\Orb(\vx)}$
  contains $0$.
\item There is a unique orbit of regular nilpotent elements
$\Orb(\ve_{r})$ where $\ve_{r} = \ve_{\alpha} + \ve_\beta$.
In particular, $\dim (\fc(\ve_r))=2$ and $\dim (\Orb(\ve_r))=m-2$.
\item For two nilpotent elements $\vx$ and $\vy$ we write $\vx\succeq\vy$
if $\overline{\Orb(\vx)}\supseteq \Orb(\vy)$. The following are
representatives of all the
nilpotent orbits in $\fg$ (in brackets we report $[\dim (\Orb(\vx)),\dim (\fc (\vx)) ]$):
 
      \begin{enumerate}
      \item If $\mathbf{G}$ is of type $A_2$, then
$$
\ve_r \,[6,2]\,
\succeq
\ve_\alpha  \,[4,4]\,
\succeq
0 \,[0,8]\,
.$$
           \item If $\mathbf{G}$ is of type $C_2$, then $\ve_\alpha$ and $\ve_\beta$ are no longer in the same orbit and so we have
$$
\ve_r \,[8,2]\,
\succeq
\ve_\beta  \,[6,4]\,
\succeq
\ve_\alpha  \,[4,6]\,
\succeq
0 \,[0,10]\,
.$$ 
            \item If $\mathbf{G}$ is of type $G_2$, there is an additional subregular nilpotent orbit
of an element $\ve_{sr} = \ve_{2\alpha +3\beta} + \ve_\beta$. 
In this case we have
$$
\ve_r \,[12,2]\,
\succeq
\ve_{sr} \,[10,4]\,
\succeq
\ve_\beta  \,[8,6]\,
\succeq
\ve_\alpha  \,[6,8]\,
\succeq
0 \,[0,14]\,
.$$
               \end{enumerate}
\end{enumerate}

We will now prove Theorem~\ref{theorem::2} for groups 
of type $A_2$, $C_2$ and $G_2$. 
We need to show that  if $U$ and  $V$  are subspaces of $\fg$, 
then
\begin{equation}
%\tag{$\spadesuit$}
\label{spadesuit}
\dim ([U,V]) \geq \dim (U) + \dim (V) - \dim\fg.
\end{equation}
We will use the following device 
repeatedly:
\begin{lemma}
\label{lemX}
The inequality
\begin{equation}
%\tag{$\clubsuit$}
\label{clubsuit}
\dim ([U,V]) \geq  \dim (V) - \dim (V\cap \fc(\vx))
\end{equation}
holds for any $\vx\in U$. 
In particular, if there exists $\vx\in U$ such that
$
\dim (U) + \dim (V\cap \fc(\vx)) \leq \dim \fg,
$
then inequality~(\ref{spadesuit}) holds.
\end{lemma}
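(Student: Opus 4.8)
The plan is to prove inequality~(\ref{clubsuit}) first, and then derive the ``in particular'' clause by elementary arithmetic. For the main inequality, fix any $\vx \in U$ and consider the linear map $\mathrm{ad}(\vx)\colon \fg \to \fg$. I would restrict this map to $V$, obtaining a linear map $\mathrm{ad}(\vx)|_V \colon V \to \fg$. Its image is $[\vx, V] = \mathrm{ad}(\vx)(V)$, which is contained in $[U,V]$ since $\vx \in U$; hence $\dim([U,V]) \geq \dim([\vx,V])$. By the rank--nullity theorem applied to $\mathrm{ad}(\vx)|_V$, we have $\dim([\vx,V]) = \dim(V) - \dim(\ker(\mathrm{ad}(\vx)|_V))$. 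The kernel of $\mathrm{ad}(\vx)|_V$ is precisely $\{\vv \in V \mid [\vx,\vv] = 0\} = V \cap \fc(\vx)$, since $\fc(\vx) = \ker(\mathrm{ad}(\vx))$. Combining these gives $\dim([U,V]) \geq \dim(V) - \dim(V \cap \fc(\vx))$, which is~(\ref{clubsuit}).

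For the ``in particular'' statement, suppose there exists $\vx \in U$ with $\dim(U) + \dim(V \cap \fc(\vx)) \leq \dim\fg$. Applying~(\ref{clubsuit}) for this $\vx$, I would write $\dim([U,V]) \geq \dim(V) - \dim(V \cap \fc(\vx))$. The hypothesis rearranges to $-\dim(V \cap \fc(\vx)) \geq \dim(U) - \dim\fg$, so $\dim([U,V]) \geq \dim(V) + \dim(U) - \dim\fg$, which is exactly~(\ref{spadesuit}).

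There is no real obstacle here: the lemma is a packaging of the rank--nullity theorem together with the observation that $[\vx, V] \subseteq [U,V]$ whenever $\vx \in U$. The only point requiring the smallest care is the identification of the kernel of $\mathrm{ad}(\vx)|_V$ with $V \cap \fc(\vx)$, which is immediate from the definition of the centraliser. The content of the lemma is entirely in how it will be applied in the subsequent rank~$2$ case analysis: one must, for a given configuration of $U$ and $V$, produce an element $\vx \in U$ whose centraliser meets $V$ in a subspace small enough to make the arithmetic work, and it is that search — guided by the orbit data listed in items (1)--(7) above — that constitutes the real work of the rank~$2$ proof of Theorem~\ref{theorem::2}.
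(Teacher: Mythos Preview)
Your proof is correct and matches the paper's approach: the paper's one-line proof is exactly the observation $[U,V]\supseteq[\vx,V]\cong V/(V\cap\fc(\vx))$, which is precisely your rank--nullity argument, and it leaves the ``in particular'' clause implicit just as you unpack it.
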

\begin{proof}
It immediately follows from the observation
%\begin{equation*}
%\tag{$\clubsuit$}
$[U,V] \supseteq [\vx,V] \cong {V}\Big/  ({V\cap \fc(\vx)})
$. 
%\end{equation*}
\end{proof}

Now we give a case-by-case proof of  inequality~(\ref{spadesuit}). 
Without loss of generality we assume that $1\leq \dim(U)\leq\dim (V)$
and that the field $\bK$ is algebraically closed.

 \subsection{$\mathbf{G}=A_2$} 
Using the standard facts,
observe that if $\vx\in\fg\setminus\{0\}$, then $\dim(\fc(\vx))\leq
4$. 
Moreover, if $\dim(\fc(\vx))=4$, then either $\vx\in \Orb(\ve_{\alpha})$, or $\vx$ is semisimple.
 Since $\dim\fg=8$,
 we need to establish that
 $$
\dim ([U,V]) \geq \dim (U) + \dim (V) - 8
$$
\vskip 3mm
Now we consider various possibilities.

{\bf Case 1:} If $\dim(U)\leq 4$, then
$
\dim (V\cap \fc(\vx)) \leq \dim (\fc(\vx)) \leq 4 \leq 8 - \dim (U)
$
for any nonzero $\vx\in U$.
We are done by Lemma~\ref{lemX}.
%Using $(\clubsuit)$, 
%$$
%\dim ([U,V]) \geq \dim (V) - \dim (V\cap \fc(\vx)) \geq \dim (U) + \dim (V) - 8.
%$$

{\bf Case 2:} If $\dim(U)+\dim(V)> 12$, then the hypotheses of
Proposition~\ref{my_estimate} hold. Hence, $[U,V]=\fg$ that obviously implies the desired conclusion.
\vskip 3mm
Therefore we may  suppose that $\dim(U)+\dim(V)\leq 12$ and $\dim U\geq 5$. This leaves us with the following two cases.

{\bf Case 3:}  $\dim(U)=5$ and $\dim(V)\leq 7$. We need to show that
$$
\dim ([U,V]) \geq \dim (U) + \dim (V) - 8 = \dim (V) - 3.
$$
As $\dim(\overline{\Orb(\ve_{\alpha})})=4$,
we may pick $\vx \in U$ with $\vx \not\in \overline{\Orb(\ve_\alpha)}$.
If $\vx$ is regular,
we are done by Lemma~\ref{lemX} 
since $\dim (V\cap \fc(\vx)) \leq \dim (\fc(\vx)) = 2$.
%then $(\clubsuit)$ immediately gives a desired estimate 
%$\dim([\vx,V])\geq\dim (V)-2$.
If $\vx$ is not regular, then $\dim(\fc(\vx))=4$ and $\vx$ is
semisimple. 
In particular, 
its centraliser $\fc(\vx)$ contains a Cartan subalgebra $g(\fh)$ of
$\fg$.

Let us consider the intersection $V\cap \fc(\vx)$.
If
$\dim (V\cap \fc(\vx)) \leq 3$, 
we are done by Lemma~\ref{lemX}.
%, for using $(\clubsuit)$ we have  
%$$
%\dim ([U,V]) \geq \dim (V) - \dim (V\cap \fc(\vx)) \geq \dim (V)-3.
%$$
Otherwise, $V\supseteq \fc(\vx)$ and $V$ contains a regular 
semisimple element 
$\vy\in g(\fh)\subseteq V$.
If $U\supseteq \fc(\vy)=g(\fh)$, then $U\ni \vy$
and we are done by Lemma~\ref{lemX} as in the previous paragraph.
%We may use $(\clubsuit)$ with $\vy$ instead of $\vx$:
%$$
%\dim ([U,V]) \geq \dim (V) - \dim (\fc(\vy)) \geq \dim (V)-2.
%$$
Otherwise, $\dim (U\cap \fc(\vy)) \leq 1$
and
we finish the proof using  Lemma~\ref{lemX}:
$$
\dim ([U,V]) \geq \dim (U) - \dim (U\cap \fc(\vy)) \geq 5-1 = 4 \geq \dim (V)-3.
$$

{\bf Case 4:}  $\dim(U)=\dim(V)= 6$.  
This time we  must show that 
$$
\dim([U,V])\geq 4 = \dim (V) -2.
$$
By Lemma~\ref{lemX} it suffices to find a regular element in $\vx \in
U$ (or in $V$) since 
$\dim (V\cap \fc(\vx)) \leq \dim (\fc(\vx)) = 2$.
Observe that
$$
\dim (U\cap V) \geq \dim (U) + \dim (V) - 8 = 4 =
\dim (\overline{\Orb(\ve_\alpha)}) .
$$
Since $\overline{\Orb(\ve_\alpha)}$ is an 
irreducible algebraic variety and not an affine space,
there exists $\vx \in U\cap V$ such that
$\vx\not\in\overline{\Orb(\ve_\alpha)}$.
If $\vx$ is regular, 
we are done.
%by Lemma~\ref{lemX}
%since $\dim (V\cap \fc(\vx)) \leq \dim (\fc(\vx)) = 2$.
%$$
%\dim ([U,V]) \geq \dim (V) - \dim (V\cap \fc(\vx)) \geq 6-2 
%=4 .
%$$
If $\vx$ is not regular,  $\vx$ is semisimple and 
its centraliser $\fc(\vx) = \bK\vx \oplus \fl$,  a direct sum of Lie algebras
$\bK\vx\cong \bK$ and $\fl\cong\fsl_2(\bK)$.

Consider the intersection $V\cap \fc(\vx)$. 
If $\dim (V\cap \fc(\vx)) \leq 2$, 
we are done by Lemma~\ref{lemX} as before.
%applying $(\clubsuit)$ we obtain
%$$
%\dim ([U,V]) \geq \dim (V) - \dim (V\cap \fc(\vx)) \geq 6-2=4.
%$$
Assume that $\dim (V\cap \fc(\vx)) \geq 3$.
If $\dim (V\cap \fc(\vx))=4$, $V$ contains $\fc(\vx)$
and consequently a regular semisimple
element $\vy$. 
%We finish the proof using $(\clubsuit)$:
%$$
%\dim ([U,V]) \geq \dim (U) - \dim (U\cap \fc(\vy)) \geq 6-2=4.
%$$

Finally, consider the case $\dim (V\cap \fc(\vx))=3$. 
Let $\pi_2$ be the natural projection $\pi_2:\fc(\vx)\rightarrow\fl$ and set
$W\coloneqq  \pi_2 (V\cap \fc(\vx))$. Since $\bK\vx\subseteq V\cap \fc(\vx)$,
the subspace $W$ of $\fsl_2({\bK})$
is 2-dimensional. Clearly, 
$V\cap \fc(\vx)\subseteq{\bK}\vx \oplus W$.
Since both spaces have dimension 3,
$V\cap \fc(\vx) = {\bK}\vx \oplus W$.
Then $W=\va^\perp$ 
(with respect to the Killing form), 
where $0\neq \va\in\fsl_2({\bK})$ is  either semisimple or
nilpotent.
In both cases $W$ contains a nonzero nilpotent  element $\vz$.
Thus, we have found a regular element 
$\vx+\vz\in V\cap \fc(\vx)$. 
%We may now use $(\clubsuit)$ replacing $\vx$ by $\vx+\vz$.
This finishes the proof for $A_2$.

\subsection{$\mathbf{G}=C_2$} Notice that this time $\dim (\fc (\vx))\leq 6$ for all $0\neq\vx\in\fg$. 
Moreover,  if $\dim (\fc (\vx))=6$, $\vx\in \Orb(\ve_\alpha)$. Finally,
the set $\overline{\Orb(\ve_\alpha)}= \Orb(\ve_\alpha) \cup \{0\}$ is a 4-dimensional cone,
%thus does not contain a 5-dimensional vector space. 
and the set $\overline{\Orb(\ve_\beta)} =  \Orb(\ve_\beta) \cup
\Orb(\ve_\alpha) \cup \{0\}$ is a 6-dimensional cone.
  %, so it does not contain a 7-dimensional vector space. 

As $\dim\fg=10$, this time 
we need to show that
$$\dim ([U,V])\geq \dim(U)+\dim(V)-10 = \dim(V)- (10 - \dim(U)).$$

{\bf Case 1:} $\dim(U)\leq 4$. 
We are done by Lemma~\ref{lemX} since 
for any  $0\neq \vx \in U$,
$$
\dim (V\cap \fc(\vx)) \leq \dim (\fc(\vx)) \leq 6 \leq 10 - \dim
(U).$$ 
%Hence, by $(\clubsuit)$
%$$
%\dim ([U,V]) \geq \dim (V) - \dim (V\cap \fc(\vx)) \geq \dim (U) + \dim (V) - 10.
%$$

{\bf Case 2:} $5\leq \dim(U)\leq 6$.  Hence, 
we may choose $\vx \in U$ such that $\vx \not\in \overline{\Orb(\ve_\alpha)}$.
We are done by Lemma~\ref{lemX} since
$$
\dim (V\cap \fc(\vx)) \leq \dim (\fc(\vx)) \leq 4 \leq 10 - \dim (U)
.$$ 
%which combined with $(\clubsuit)$ gives us
%$$
%\dim ([U,V]) \geq \dim (V) - \dim (V\cap \fc(\vx)) \geq \dim (U) + \dim (V) - 10.
%$$

{\bf Case 3:} If $\dim(U)+\dim(V)> 16$, then
 then the hypotheses of Proposition~\ref{my_estimate} hold. Hence, $[U,V]=\fg$, which implies the desired conclusion.

Therefore, we may  assume that $\dim(U)+\dim(V)\leq 16$ and 
$\dim (U)\geq 7$. This leaves us with the remaining two cases.

{\bf Case 4:}  $\dim(U)=7$, $\dim(V)\leq 9$. 
Now we must show that $\dim ([U,V]) \geq  \dim (V) - 3.$
By Lemma~\ref{lemX} it suffices to pick $\vx \in U$ 
with 
$\dim (V\cap \fc(\vx)) \leq 3$.
In particular,  a regular element will do.

Let us choose $\vx \in U$ such that $\vx \not\in \overline{\Orb(\ve_\beta)}$.
If $\vx$ is regular, we are done. % by using $(\clubsuit)$.
If $\vx$ is not regular,  $\vx$ is semisimple.
Hence, 
its centraliser $\fc(\vx)$ contains a Cartan subalgebra $g(\fh)$.
Let us consider the intersection $V\cap \fc(\vx)$.
If
$\dim (V\cap \fc(\vx)) \leq 3$, we are done again.
%as  
%$$
%\dim ([U,V]) \geq \dim (V) - \dim (V\cap \fc(\vx)) \geq \dim (V)-3.
%$$
Assume that $\dim (V\cap \fc(\vx)) =4$. Consequently, $V\supseteq \fc(\vx)$ and $V$ contains a regular 
semisimple element
$\vy\in g(\fh)\subseteq V$.
Now if $U\supseteq \fc(\vy)=g(\fh)$, then 
we have found a regular element
$\vy\in U$.
% and we may use $(\clubsuit)$: 
%$$\dim ([U,V]) \geq \dim (V) - \dim (\fc(\vy)) \geq \dim (V)-2>\dim(V)-3.
%$$
Otherwise, $\dim (U\cap \fc(\vy)) \leq 1$, 
and so, as $\vy\in V$, we finish using inequality~(\ref{clubsuit})   
of Lemma~\ref{lemX}:
$$
\dim ([U,V]) \geq \dim (U) - \dim (U\cap \fc(\vy)) \geq 7-1 = 6 \geq \dim (V)-3.
$$

{\bf Case 5:}  $\dim(U)=\dim(V)= 8$.
Let us observe that
$$
\dim (U\cap V) \geq \dim (U) + \dim (V) - 10 = 6 =
\dim (\overline{\Orb(\ve_\beta)}).
$$
Since $\overline{\Orb(\ve_\beta)}$ is an 
irreducible algebraic variety and not an affine space,
there exists $\vx \in U\cap V$ such that
$\vx\not\in\overline{\Orb(\ve_\beta)}$.
If $\vx$ is regular, we are done  by Lemma~\ref{lemX}:
$$
\dim ([U,V]) \geq \dim (V) - \dim (V\cap \fc(\vx)) \geq 8-2 
=6=
\dim (U) + \dim (V) - 10.
$$
If $\vx$ is not regular,  then $\vx$ is semisimple and 
its centraliser $\fc(\vx) = \bK\vx \oplus \fl$, a direct sum of Lie algebras
$ {\bK}$ and $\fl\cong\fsl_2({\bK})$.
If $\dim (V\cap \fc(\vx)) \leq 2$, then by Lemma~\ref{lemX} 
$$
\dim ([U,V]) \geq \dim (V) - \dim (V\cap \fc(\vx)) \geq 8-2=6.
$$
Thus we may assume that $\dim (V\cap \fc(\vx)) \geq 3$.
We now repeat the  argument from the last paragraph of $\S$ 4.1. This concludes $\S$ 4.2.

\subsection{$\mathbf{G}=G_2$} In this case $\dim (\fc (\vx))\leq 8$ for all $0\neq\vx\in\fg$. Moreover, 
if $\dim (\fc (\vx))=8$, then $\vx\in \Orb(\ve_\alpha)$. The centre of $\fc(\ve_\alpha)$ is $\overline{\bK}\ve_\alpha$.
Finally, the set $\overline{\Orb(\ve_\alpha)} = \Orb(\ve_\alpha) \cup \{0\}$ is a 6-dimensional cone,
the set $\overline{\Orb(\ve_\beta)} = \Orb(\ve_\beta) \cup
\Orb(\ve_\alpha) \cup \{0\}$ 
is an 8-dimensional cone 
and 
the set $\overline{\Orb(\ve_{sr})} = \Orb(\ve_{sr}) \cup
\Orb(\ve_\beta) 
\cup \Orb(\ve_\alpha) \cup \{0\}$ is a 10-dimensional cone.

As  $\dim \fg =14$, our goal now is to show that
$$
\dim ([U,V]) \geq \dim (U) + \dim (V) - 14
$$
In order to do so, as before, we are going to consider several mutually exclusive cases.

{\bf Case 1:} $\dim(U)\leq 6$.
We are done by Lemma~\ref{lemX} since 
for any  $0\neq \vx \in U$,
%For any non-zero $\vx \in U$, 
$$
\dim (V\cap \fc(\vx)) \leq \dim (\fc(\vx)) \leq 8 \leq 14 - \dim (U)
.$$ 
%Hence, by $(\clubsuit)$:
%$$
%\dim ([U,V]) \geq \dim (V) - \dim (V\cap \fc(\vx)) \geq \dim (U) + \dim (V) - 14.
%$$

{\bf Case 2:} $7\leq \dim(U)\leq 8$.
In this case we may choose $\vx \in U$ such that 
$\vx \not\in
\overline{\Orb (  \ve_\alpha )}$. 
We are done by Lemma~\ref{lemX} since
$$
\dim (V\cap \fc(\vx)) \leq \dim (\fc(\vx)) \leq 6 \leq 14 - \dim (U)
.$$
%, using $(\clubsuit)$, we obtain that
%$$
%\dim ([U,V]) \geq \dim (V) - \dim (V\cap \fc(\vx)) \geq \dim (U) + \dim (V) - 14.
%$$

{\bf Case 3:} $9\leq \dim(U)\leq 10$.
Now we may pick $\vx \in U$ such that $\vx \not\in \overline{\Orb (\ve_\beta)}$. 
Again we are done by Lemma~\ref{lemX} since
%Hence, 
$$
\dim (V\cap \fc(\vx)) \leq \dim (\fc(\vx)) \leq 4 \leq 14 - \dim (U)
.$$
%Using $(\clubsuit)$, we arrive  at the same conclusion:
%$$
%\dim ([U,V]) \geq \dim (V) - \dim (V\cap \fc(\vx)) \geq \dim (U) + \dim (V) - 14.
%$$

{\bf Case 4:}  If $\dim(U)+\dim(V)> 22$, then
$[U,V]=\fg$ by Proposition~\ref{my_estimate}.  
%This gives the desired result.
This leaves us with a single last possibility.

{\bf Case 5:}  $\dim(U)=\dim(V)= 11$. It remains to show that
$$
\dim ([U,V]) \geq 8 = \dim (V) -3.
$$
By dimension considerations we can choose $\vx \in U$ such that 
$\vx \not\in \overline{\Orb (\ve_{sr})}$.
Then $\dim (\fc(\vx)) \leq 4$. 
If $\dim (V\cap \fc(\vx)) \leq 3$, we are done by by Lemma~\ref{lemX}.
%$(\clubsuit)$:
%$$
%\dim ([U,V]) \geq \dim (V) - \dim (V\cap \fc(\vx)) \geq 11-3 =8.
%$$
Thus we may assume  that $\dim (\fc(\vx)) =4$ and $\fc(\vx)\subseteq V$.
Since $\vx$ is not nilpotent, 
$\vx$ must be semisimple. Hence, $\fc(\vx)\subseteq V$
contains a Cartan subalgebra $g(\fh)$ and, 
therefore, a regular semisimple element $\vy\in g(\fh)$.
We are done by Lemma~\ref{lemX}:
$$
\dim (U\cap \fc(\vy)) \leq \dim (\fc(\vy)) \leq 2.
$$ 
%Thus, by $(\clubsuit)$: 
%$$
%\dim ([U,V]) \geq \dim ([U,\vy]) \geq 
%\dim (U) - \dim (U\cap \fc(\vy)) 
%\geq 
%\dim (U) - \dim (\fc(\vy)) 
%\geq 11-2 > 8.
%$$
We have finished the proof of Theorem~\ref{theorem::2}.

%\section{$C_l$ in characteristic 2}
%{\bf It is a sketch that demonstrates that we have no hope in this case:-(}
%The dimension of the commuting variety is $n+3l$ in this case.
%Consequently, Abert++ method gives estimates 
%$$
%\dim ([U,V]) \leq \dim (U) + \dim (V) - n - 3l
%\ \ \mbox{ and } \ \ 
%\cod ([U,V]) \leq 3l + \cod (U) + \cod (V).
%$$
%The form on $C_l$ is the trace form $\langle X,Y\rangle = \mbox{Tr} (XY)$.
%Its kernel has dimension $l^2$. Thus, we can conclude that $[U,V]=\fg$
%as soon as
%$$
%\dim (U) +\dim (V) > n+k+r = 2n - (n-k) + r = 2n - 2(h^\vee -1) +r = 2(2l^2+l) - 2l +l^2=5l^2. 
%$$
%That is not very useful because 
%$5l^2 > 2(2l^2+2l)= 2 \dim
%(\fg)$,
%if $l>4$. 
%This agrees with the ridgeline number being negative. 
%
%Thus, unless we can find a better form (with much smaller nullity), we
%have nothing to say in this case...

\appendix
\section{Ridgeline numbers and maximal dimensions of centralisers}
\label{table1}
Column 3 contains the nullity $r$
of an invariant form. It is equal to
$\dim \fz$. 
Column 4 contains the dual Coxeter number.
Column 5 contains the ridgeline number.
Column 6 contains dimension of the centraliser of the minimal nilpotent.
Column 7 contains a minimal non-central semisimple element in $\fg^\flat$,
using simple coweights $\vy_i$
and the enumeration of roots in Bourbaki \cite{kn::Bo}.
Column 8 contains the dimension of the centraliser of the minimal semisimple
element
in $\fg^\flat$.
\begin{center}
%\caption{}
%\vskip 3mm
\renewcommand{\arraystretch}{2}
\begin{tabular}{|c|c|c|c|c|c|c|c|} \hline
type of $\fg$ &  $p$ & $r$ & $h^{\vee}$ &  $v(\fg)$  &   $m - 2( h^\vee -1)$ & $\vy$ &
$\dim \fc(\vy)$\\
\hline
$A_l,\ l\geq 2 $ & $(p,l+1)=1$ &  $0$ & $l+1$ & $\frac{1}{2}$ & $l^2$ & $\vy_1$ & $l^2$\\
$A_l, \ l\geq 2 $ & $p\mid(l+1)$ &  $1$ & $l+1$ & $\frac{l}{2l-1}$ & $l^2$ & $\vy_1$ & $l^2$\\
$B_l, \ l\geqslant 3$ &$p\neq 2$ & $0$ & $2l-1$   &   $\frac{1}{4}(1+\frac{1}{l-1})$  & $2l^2-3l+4$ & $\vy_1$ &    $2l^2-3l+2$ \\
$C_l, \ l\geq 2$ &$p\neq 2$ &  $0$ & $l+1$ &  $\frac{1}{2}$ &  $2l^2-l$ & $\vy_1$ &  $2l^2-3l+2$  \\
$D_l,\  l\geq 4$ &$p\neq 2$ &  $0$ & $2l-2$ & $\frac{1}{4}(1+\frac{3}{2l-3})$ & $2l^2-5l+6$ &  $\vy_1$ & $2l^2-5l+4$\\
$D_l,\  l=2l_0\geq 4$ & $2$ &  $2$ & $2l-2$ & $\frac{1}{4}(1+\frac{2}{l-2})$ & $2l^2-5l+6$ &  $\vy_1$ & $2l^2-5l+4$\\
$D_l,\  l=2l_0+1\geq 4$ & $2$ &  $1$ & $2l-2$ &
$\frac{1}{4}(1+\frac{7}{4l-7})$ & $2l^2-5l+6$ &  $\vy_1$ & $2l^2-5l+4$\\
$G_2$ & $p> 3$ & $0$ & $4$ & $\frac{1}{3}$ & $8$ & $\vy_1$ & $4$\\
$G_2$ & $p=2$ & $0$ & $4$ & $\frac{1}{3}$ & $8$ & $\vy_1$ & $6$ \\
$F_4$ & $p\neq 2$ &  $0$ & $9$& $\frac{1}{4}$ & $36$ & $\vy_1$ & $22$\\
$E_6$ & $p\neq 3$ &  $0$ & $12$ & $\frac{3}{11}$ & $56$ & $\vy_1$ & $46$\\
$E_6$ & $3$ &  $1$ & $12$ & $\frac{2}{7}$ & $56$ & $\vy_1$ & $46$\\
$E_7$ & $p\neq 2$ &  $0$ & $18$ &  $\frac{7}{34}$ & $99$ & $\vy_7$ & $79$ \\
$E_7$ & $p=2$ &  $1$ & $18$ &  $\frac{7}{33}$ & $99$ & $\vy_7$ & $79$ \\
$E_8$ &  $p\neq 2$ & $0$ & $30$ & $\frac{4}{29}$ & $190$ & $\vy_8$ & $134$\\
$E_8$ &  $p=2$ & $0$ & $30$ & $\frac{4}{29}$ & $190$ & $\vy_3$ & $136$ \\
\hline
\end{tabular}
\end{center}
%\end{table}
%\end{defn}

\end{document}